\documentclass[10pt]{amsart}

\usepackage{times,amsfonts,amsmath,amstext,amsbsy,amssymb,
  amsopn,amsthm,upref,eucal, amscd}
\usepackage[T1]{fontenc}
\usepackage{color}

\usepackage[pdftex]{graphicx}

\newtheorem*{thma}{Theorem A}
\newtheorem*{thmb}{Theorem B}
\newtheorem*{corc}{Corollary C}
\newtheorem{theorem}{Theorem}[section]

\newtheorem{proposition}[theorem]{Proposition}

\numberwithin{equation}{section}

\theoremstyle{definition}

\newtheorem{example}[theorem]{Example}

\newtheorem{remark}[theorem]{Remark}

\def\leq{\leqslant }
\def\geq{\geqslant}
 
\def\A{\mathcal{A}}

\begin{document}

\title[Diophantine interval exchange maps]{Cohomological equation and local conjugacy class of Diophantine interval exchange maps}

\author{Giovanni Forni, Stefano Marmi and Carlos Matheus}

\date{\today}

\begin{abstract}
We extend some results of Marmi--Moussa--Yoccoz on the cohomological equations and local conjugacy classes of interval exchange maps of restricted Roth type. 

In particular, we answer a question of Krikorian about the codimension of the local conjugacy class of  self-similar interval exchange maps associated to the Eierlegende Wollmilchsau and the Ornithorynque. 
\end{abstract}
\maketitle

 

\section{Introduction}

The theory of circle diffeomorphisms is a classical subject in Dynamical Systems going back to the seminal works of Poincar\'e and Denjoy. After the groundbreaking results obtained by several mathematicians (including Arnold, Herman and Yoccoz), the cohomological equations and the conjugacy classes of irrational rotations are very well-understood. 

This scenario motivated Forni \cite{Fo97}, \cite{Fo07}, Marmi, Moussa and Yoccoz \cite{MMY05}, and Marmi and Yoccoz \cite{MY16}  to study the cohomological equations 
for interval exchange maps and translation flows (i.e., natural generalizations of circle rotations). 

The linearization results of Marmi, Moussa and Yoccoz \cite{MMY12} cover \emph{almost all}\footnote{In a very precise sense that we are not going to describe here.} interval exchange maps and translation flows. Nevertheless, some natural interesting classes of interval exchange maps were beyond the scope of the works of these authors. In particular, it is not surprising that, during a recent talk by the second author of this paper, Krikorian asked him about the codimension $d_{EW}(r)\in\mathbb{N}$ of the local $C^r$-conjugacy class of a self-similar interval exchange map $T_0$ associated to the \emph{Eierlegende Wollmilchsau}\footnote{An exotic square-tiled surface discovered by Forni in 2006.} (amongst $C^{r+3}$ simple deformations of $T_0$). 
In this article, we answer  Krikorian's question: our Theorem \ref{t.B} implies that $d_{EW}(r) = 4r+8$ for 
all $r\geq 3$.

In fact, in \S\S~\ref{subsec:DC} we introduce classes $DC(\eta,\theta,\sigma)$ of {\it almost Roth} interval exchange transformations (i.e.t.'s) which generalize the class of Roth type i.e.t.'s, introduced by Marmi, Moussa and Yoccoz in~\cite{MMY05}, and we extend the results on cohomological equations of~\cite{MMY05} and the linearization results of~\cite{MMY12} to these new classes, for sufficiently small values of the exponent 
$\eta\geq 0$. The classes $DC(\eta,\theta,\sigma)$ depend on parameters $\eta\geq 0$, $\theta>0$ and 
$\sigma>0$  such that $\eta$ is the analogous of the Diophantine exponent of circle rotations
 (in particular $\eta=0$ for Roth type i.e.t.'s), while the parameters $\theta$ and $\sigma$ are related to the
 ``spectral gap'' and to the (non-uniform) hyperbolicity of the Kontsevich--Zorich cocycle, respectively. Our
 classes of almost Roth type i.e.t.'s generalizes {\it a fortiori} the class of restricted Roth type i.e.t.'s for which
 the linearization theorem of \cite{MMY12} was proved. In particular,  self-similar i.e.t.'s  associated to the \emph{Eierlegende Wollmilchsau} are of Roth type, hence of almost Roth type, but they are \emph{not} of restricted Roth type: thus the linearization results of~\cite{MMY12} do not apply, hence the interest of Krikorian's question.
 
 \smallskip
 In this paper we prove the following results, stated in greater detail in section \ref{section:Statements}.
\begin{thma} [see Theorem~\ref{t.A}] Let $T$ be an i.e.t. on $d$-intervals. There exists 
$\eta_0:= \eta_0 (d, \theta, \sigma)>0$ such that if $T \in DC(\eta,\theta,\sigma)$ for $0\leq \eta <\eta_0$ then the following holds. For any function $\varphi$ piece-wise $C^{r+BV}$, with compact support on each of the subintervals of continuity of $T$, there exists a piece-wise smooth function $\chi$, which is a polynomial of degree $<r$ on every subinterval of continuity, such that the cohomological equation
$$
u \circ T - u = \varphi + \chi
$$
has a solution $u$ of class $C^{r-1}$ on the whole interval of definition of $T$.
  \end{thma}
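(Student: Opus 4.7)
The plan is to adapt the iterative scheme of Marmi--Moussa--Yoccoz~\cite{MMY05}, replacing their Roth-type estimates on the Kontsevich--Zorich (KZ) cocycle by the weaker estimates encoded in the condition $DC(\eta,\theta,\sigma)$ and keeping careful track of how the exponent $\eta$ enters each convergence step. Let $(T^{(n)})_{n\geq 0}$ denote the orbit of $T$ under an accelerated Rauzy--Veech algorithm adapted to the Diophantine class, and let $S(n)$ be the associated special Birkhoff sum operator acting on functions over $T^{(n)}$. On the space of piecewise constant functions $S(n)$ realises, up to standard identifications, the KZ cocycle, and the hypothesis $T\in DC(\eta,\theta,\sigma)$ supplies controlled polynomial-in-height bounds for its restrictions to the stable, central and unstable subbundles, the $\eta$-dependent slack appearing as small powers of the Rauzy--Veech matrix norms.

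The construction of $\chi$ and $u$ will then proceed by telescoping. One writes $\varphi=\sum_{n\geq 0}(\varphi_n-\varphi_{n+1})$, where each difference is expressed as a coboundary $u_n\circ T - u_n$ plus a piecewise polynomial correction that absorbs the components of $S(n)\varphi$ lying in the central plus unstable subbundles of the KZ cocycle. Iterating this construction at the level of primitives of orders up to $r$, as in \S 3 of~\cite{MMY05}, produces a piecewise polynomial $\chi$ of degree $<r$ on each subinterval of continuity of $T$, together with a candidate solution $u=\sum_n u_n$. Smallness of $\eta$ is exactly what guarantees that the geometric mismatch between the Rauzy--Veech tower partitions and the fixed subintervals on which $\chi$ must be polynomial is controlled by the spectral gap $\theta$ and the hyperbolicity exponent $\sigma$, so that the piecewise-polynomial structure of $\chi$ is compatible with the dynamics at every scale.

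The main obstacle is the $C^{r-1}$ convergence of the telescoping series defining $u$. When $\eta=0$ (Roth type), the $S(n)$-norms decay sub-exponentially on the stable subbundle and grow sub-exponentially on the central plus unstable subbundle, so convergence is automatic once the polynomial obstruction has been removed. For $\eta>0$, each induction step contributes an extra factor essentially of the form $\|B(n)\|^{C\eta}$, and these factors compound: the estimate closes only when $\eta$ is strictly below a threshold determined by $d$, $\theta$ and $\sigma$, which is precisely the constant $\eta_0(d,\theta,\sigma)$ appearing in the statement. Pinning this threshold down, and verifying that piecewise $C^{r+BV}$ regularity propagates under $S(n)$ in such a way that derivative estimates up to order $r$ survive the summation, is where the bulk of the technical work will lie.
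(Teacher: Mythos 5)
Your high-level plan — replace the Roth-type estimates in the Marmi--Moussa--Yoccoz argument by the $\eta$-slackened $DC(\eta,\theta,\sigma)$ bounds and track how the $\|B\|^{C\eta}$ factors compound — is the right one, and it matches the spirit of the paper's proof in Section~\ref{section:ProofA}. However, the mechanism you describe for producing $u$ diverges from the paper's in a way worth pointing out, and one of your explanations of where the smallness of $\eta$ is used is off the mark.

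The paper does not build $u$ as a telescoping series $u=\sum_n u_n$ and then check its $C^{r-1}$ convergence. Instead it goes through Gottschalk--Hedlund (Proposition~\ref{p.continuity}): one first builds a \emph{correction operator} $P^{(k)}=P_0^{(k)}+\Delta P^{(k)}$ on the derivative $D\varphi$ commuting with the special Birkhoff sum operators $S(n_k,n_l)$, then shows that $S(0,n_k)P^{(0)}D\varphi$ decays like $\|B(0,n_k)\|^{(4\eta-\theta/d)\sigma/(d-1)}$, and applies Gottschalk--Hedlund to get a continuous solution once this exponent is $<-\eta$ (using Proposition~\ref{p.reduction} to pass from special Birkhoff sums to ordinary ones). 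Higher regularity is obtained by applying the same device to successive derivatives, relying on the fact that i.e.t.'s commute with $D$ on continuity intervals. A telescoping construction of $u$ is a legitimate alternative route (it is essentially an unrolled Gottschalk--Hedlund), but your proposal leaves the convergence check entirely open, and in particular does not isolate where the operator $\Delta P^{(k)}=\sum_{l>k}B_\flat(n_k,n_l)^{-1}\circ\Lambda(n_{l-1},n_l)\circ S(n_k,n_{l-1})$ — the technical core of the whole proof — comes from.

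More concretely, your claim that smallness of $\eta$ is needed ``so that the piecewise-polynomial structure of $\chi$ is compatible with the dynamics at every scale'' misidentifies the role of $\eta$. The correction $\chi$ lives in $\Gamma(T)$ (and its polynomial analogues $\Gamma(r)$) by construction, with no geometric mismatch to resolve. The restriction on $\eta$ arises from three separate $\eta$-losses in the estimates: (i) the almost-balance of $T(n_k)$ in Proposition~\ref{p.balanced-times} degrades $\max_\alpha\lambda_\alpha(T(n_k))$ to $O(\|B(0,n_k)\|^{-1+\eta+\varepsilon})$; (ii) condition (a) inflates each $\|B(n_j,n_{j+1})\|$ by $\|B(0,n_j)\|^{\eta+\varepsilon}$; and (iii) the resulting decay of $\|S(0,n_k)P^{(0)}D\varphi\|_{L^\infty}$ must beat $\|B(0,n_k)\|^{-\eta}$ for Proposition~\ref{p.reduction} to apply. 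The threshold $\eta_0(d,\theta,\sigma)$ comes out of two interleaved two-regime optimizations (parameters $\rho$ and $\kappa$ in the proof of Theorem~\ref{t.310}) yielding the explicit condition $\bigl(4d+\tfrac{d(d-1)}{\sigma}\bigr)\eta<\theta$. You correctly sensed that a threshold of this shape exists; you would need to discover and carry out those optimizations to pin it down, and your present sketch does not yet contain the ingredients to do so.
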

Theorem A generalizes the results of \cite{MMY05} which were proved for i.e.t.'s of Roth type,
that is, for i.e.t.'s in the class $DC(0, \theta, \sigma)$.   It is an interesting open problem to generalize further our results to arbitrary Diophantine exponents $\eta >0$ (allowing greater loss of regularity  for the solutions).

\begin{thmb} [see Theorem~\ref{t.B}]  Let  $T_0$ be an i.e.t. on $d =2g+ s-1$-intervals, where $g\geq 2$ denote the genus of the corresponding translation surface and $s$ the number of its singularities.  Let $\mu$ denote the dimension of the stable space of the Kontsevich--Zorich cocycle. There exists 
$\eta_0:= \eta_0 (d, \theta, \sigma)>0$ such that if $T_0 \in DC(\eta,\theta,\sigma)$ for $0\leq \eta <\eta_0$ then the following holds.
For each $r\geq 3$ integer, among the $C^{r+3}$ ``simple deformations'' $T$ of $T_0$ (for instance among the non-linear i.e.t's $T$ which coincide with $T_0$ on a neighborhood of its discontinuities) those which are conjugated to $T_0$ by a diffeomorphism $C^r$-close to the identity form a $C^1$-submanifold of codimension $d^*= (g-1)(2r+1)+s+g-\mu$ of the space of all $C^{r+3}$ ``simple deformations'' of $T_0$. 
\end{thmb}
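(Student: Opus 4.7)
The plan is to follow the implicit-function-theorem strategy used by Marmi--Moussa--Yoccoz in \cite{MMY12}, with Theorem A as the crucial linear input, and then to refine the codimension count to account for the possibly non-trivial central directions of the Kontsevich--Zorich cocycle (i.e.\ the case $\mu<g$). First I would endow the set of $C^{r+3}$ simple deformations of $T_0$ with a Banach affine manifold structure and realise the action of diffeomorphisms $C^{r+1}$-close to the identity (fixing the endpoints of the ambient interval) by conjugation $h\mapsto h\circ T_0\circ h^{-1}$. The local $C^r$-conjugacy class of $T_0$ is, by definition, the orbit of this action through $T_0$, and the goal is to exhibit it as the zero set of a $C^1$ map into a $d^*$-dimensional target.

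The differential of the conjugacy map at the identity is the coboundary operator $v\mapsto v\circ T_0-v$, and Theorem A supplies a right inverse modulo the finite-dimensional space of piecewise polynomials of degree $<r$ on the $d=2g+s-1$ subintervals of continuity. I would then construct an obstruction map $\mathcal{O}$ from $C^{r+3}$ simple deformations to this finite-dimensional space whose derivative at $T_0$ is surjective onto the \emph{genuine} obstructions, and whose vanishing characterises local conjugacy to $T_0$. Promoting this linearised picture to a $C^1$-submanifold statement will require a Newton/KAM-type iteration, for which the tame estimates of Theorem A (valid once $\eta$ is small enough in terms of $d,\theta,\sigma$) are precisely tailored.

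The core computation is the dimension of the genuine obstruction space. One decomposes the naive $rd$-dimensional space of piecewise polynomials according to the Oseledets splitting of the Kontsevich--Zorich cocycle: along the unstable directions the obstructions are removable by iterating Rauzy--Veech renormalisation, exactly as in \cite{MMY05,MMY12}; along the stable directions of dimension $\mu$ they contribute as in the restricted Roth case; and along the $g-\mu$ central (zero-exponent) directions they cannot be eliminated and survive as genuine obstructions, producing the new term $+(g-\mu)$. Combined with the standard bookkeeping of trivial coboundaries and the $s$ singularity parameters, this reproduces $d^*=(g-1)(2r+1)+s+g-\mu$.

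The hardest step will be the analysis of the central directions: in \cite{MMY12} the restricted Roth hypothesis forces $\mu=g$, so the cocycle is uniformly hyperbolic transversally to the trivial directions and central directions are absent; here one must verify that each of the $g-\mu$ central directions genuinely obstructs the conjugacy (rather than being removable by a more elaborate scheme) and that their contribution survives the Newton iteration despite the only sub-exponential decay along them. A secondary delicacy is that the Diophantine exponent $\eta>0$ slightly weakens the loss-of-derivatives estimates provided by Theorem A, which is what forces us to work strictly below the threshold $\eta_0(d,\theta,\sigma)$.
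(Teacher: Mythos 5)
Your overall plan (implicit-function-theorem / transversality strategy with Theorem A as the linear input, plus a modified codimension count to absorb the $g-\mu$ central directions) is correctly oriented, and your final dimension count is correct, but you miss the central technical device of the paper's proof and, as a result, propose unnecessarily heavy machinery where a lighter argument in fact works.

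The paper's proof of Theorem B does \emph{not} use a Newton/KAM iteration. It uses Herman's Schwarzian-derivative trick: applying the Schwarzian derivative $S$ to the conjugacy equation reduces the linearization problem to a quasi-linear cohomological equation for $S(h)$. The crucial point, which your proposal never touches, is that $S$ maps $C^3$ into $C^0$ and can be inverted, producing a \emph{gain} of two derivatives that exactly offsets the loss of two derivatives incurred when solving the cohomological equation via Theorem~\ref{t.A}. Because of this balance, the whole nonlinear problem is solvable by the contraction mapping principle (a fixed-point theorem in a single Banach space), not by a scheme with shrinking regularity scales. Your proposed Newton/KAM iteration would require tame estimates with controlled loss in a whole scale of norms; Theorem~\ref{t.A} as stated gives a fixed-loss estimate and is tailored to the Schwarzian route, not to a KAM scheme. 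This is also why the paper's bound on $\eta_0$ has the specific form $\bigl(4d+\tfrac{d(d-1)}{\sigma}\bigr)\eta<\theta$: it is the threshold below which the loss of derivatives in the cohomological equation stays $\leq 2$, so that Herman's trick still closes.

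Relatedly, the step you flag as hardest, ``verifying that each of the $g-\mu$ central directions genuinely obstructs the conjugacy and survives the iteration,'' is handled in the paper entirely at the linear level and before any fixed-point argument is set up. Condition (d) in the definition of $DC(\eta,\theta,\sigma)$ asserts that no vector outside $\Gamma_s(T_0)$ decays under the Kontsevich--Zorich cocycle, and Remark~\ref{r.39} deduces that $\Gamma_s(T_0)$ coincides with the space $\Gamma_{T_0}$ of piecewise-constant coboundaries. Hence the central directions are automatically genuine obstructions for the cohomological equation, and the codimension bookkeeping in the proof (dimensions $\dim\Gamma(r)=rd$, $\dim\Gamma_\partial(r)=(2g-1)r+1$, $\dim\Gamma_{T_0}(r)=\mu+r-1$, then counting the independent equations (6.1)--(6.4) of \cite{MMY12}) records them without any further dynamical analysis. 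Your plan leaves this as a delicate open step in the middle of a nonlinear iteration; the actual proof disposes of it before the nonlinear analysis begins, which is both simpler and, as far as the estimates available here are concerned, necessary.
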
 
Theorem B generalizes the results of \cite{MMY12} which were proved for  i.e.t.'s of restricted Roth type
that is, for i.e.t.'s in the class $DC(0, \theta, \sigma)$ such that, in addition, the Kontsevich--Zorich cocycle is 
non-uniformly hyperbolic. It is known that the Kontsevich--Zorich cocycle is non-uniformly hyperbolic with respect to many invariant measures on the moduli space of translation surfaces such as Masur--Veech measures \cite{AV07},~\cite{Fo02},~\cite{Fo11},  measures supported on $SL(2,\mathbb R)$-orbit of non-arithmetic {\it algebraically primitive} Veech surfaces~\cite{Fo11}, and most (in a precise sense) square-tiled surfaces \cite{MMY15}.  However, measures with zero exponents do exist \cite{FMZ11},  \cite{FMZ14}.

\smallskip

As in \cite{MMY12}, section 8,  it is possible to derive from Theorem B an analogous result on ``simple deformations'' of translation flows.  

A translation surface will be called of {\it almost Roth type} 
$DC(\eta,\theta,\sigma)$  if its vertical translation flow is the suspension of an i.e.t. of almost Roth type 
$DC(\eta,\theta,\sigma)$.

\begin{corc}  There exists $\eta_0:=\eta_0 (g,s,\theta, \sigma)>0$ such that the following holds. Given a translation surface of genus $g$ with $s$ cone points of almost Roth type $DC(\eta,\theta,\sigma)$ with $0\leq \eta <\eta_0$ and any integer $r\geq 3$, among the $C^{r+3}$ ``simple deformations''  of the vector field (for instance among small deformations supported away from the set of cone points) those which are conjugated to the vertical flow by a diffeomorphism $C^r$-close to the identity form a $C^1$-submanifold of codimension $d^*= (g-1)(2r+1)+s+g-\mu$ of the space of all $C^{r+3}$ ``simple deformations'' of the vertical flow. 
\end{corc}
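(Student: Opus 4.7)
The plan is to deduce Corollary C from Theorem B by realizing the translation flow as a suspension over a Poincar\'e section and transferring the conjugacy problem from the vector field to its return map, following the template of~\cite{MMY12}, Section~8. First I would fix a horizontal transversal $I$ built from horizontal separatrices so that the first return map of the vertical flow is the given i.e.t.\ $T_0$, with return time $h_0$ piecewise constant on the continuity intervals of $T_0$; the surface is then identified with the translation-type suspension of $(T_0,h_0)$. Since $T_0\in DC(\eta,\theta,\sigma)$ by hypothesis, both Theorem~A (cohomological equation on $I$) and Theorem~B (conjugacy of i.e.t.'s) are available.

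A $C^{r+3}$ simple deformation $X$ of the vertical vector field is supported away from the cone points, so $I$ remains a global section for $X$; the new first-return map $T$ is a $C^{r+3}$ simple deformation of $T_0$ (it coincides with $T_0$ on a neighborhood of each discontinuity), and the new return time has the form $h=h_0+\delta h$ with $\delta h\in C^{r+3}$ supported in the interior of the continuity intervals. The assignment $X\mapsto (T,\delta h)$ is a $C^1$ submersion from the space of simple deformations of the vector field onto the corresponding space of pairs, with local inverse given by the standard suspension construction. The core equivalence I would establish is: $X$ is conjugate to the vertical vector field by a $C^r$-close-to-identity diffeomorphism $\Psi$ if and only if (i) there exists a $C^r$-close-to-identity $\psi$ on $I$ conjugating $T$ to $T_0$, and (ii) the transported height perturbation $\delta h\circ\psi^{-1}$ is a $T_0$-coboundary modulo the piecewise polynomial obstruction $\chi$ produced by Theorem~A. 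One direction is elementary; for the converse, Theorem~A supplies $u\in C^{r-1}$ with $u\circ T_0-u=\delta h\circ\psi^{-1}+\chi$, and one constructs $\Psi$ by taking $\psi$ on $I$ and extending it by shifting the time coordinate by $u$, while the polynomial obstruction $\chi$ is absorbed by a smooth time reparametrization supported in the interior of the continuity intervals.

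The codimension count then combines Theorem~B in the $T$-direction with an analysis of the height direction. By Theorem~B the locus of $T$ conjugate to $T_0$ is locally a $C^1$-submanifold of codimension $d^*=(g-1)(2r+1)+s+g-\mu$ inside the simple deformations of $T_0$. For the height direction, the additional freedom in $\delta h$ is cut out precisely by the cokernel of the coboundary operator $u\mapsto u\circ T_0-u$, but this same cokernel also controls the freedom in the time reparametrization entering $\Psi$; the two contributions cancel, so the global codimension for the flow conjugacy problem coincides with $d^*$, and the submanifold structure is inherited from the $T$-direction under the submersion $X\mapsto (T,\delta h)$.

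The principal technical difficulty is verifying this last cancellation rigorously: one must check that, near $(T_0,h_0)$, the finite-dimensional piecewise polynomial cokernel appearing in Theorem~A for the height equation is in canonical bijection with the time-reparametrization freedom inside $\Psi$, so that no hidden obstruction survives. This is a linear-algebraic bookkeeping analogous to the one carried out in~\cite{MMY12}, Section~8 for restricted Roth type, and it transfers to the almost Roth setting $DC(\eta,\theta,\sigma)$ because the relevant kernel and cokernel dimensions of the coboundary operator, and their relation to the Kontsevich--Zorich stable dimension $\mu$, are preserved by Theorem~A as long as $\eta<\eta_0(g,s,\theta,\sigma)$ is sufficiently small.
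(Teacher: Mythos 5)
Your overall strategy---realizing the translation flow as a suspension over a Poincar\'e section, transferring the conjugacy problem to the pair (return map, return time), and combining Theorem~B for the return map with Theorem~A for the height cohomology---is exactly what the paper intends. The paper itself gives no proof of Corollary~C beyond the sentence ``As in \cite{MMY12}, section 8, it is possible to derive from Theorem~B an analogous result on `simple deformations' of translation flows,'' so your proposal is the kind of fleshing-out that the reference implicitly requires, and the reduction to the section is the right first move.

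The gap is precisely the step you yourself flag. Your codimension count rests on the assertion that the obstruction $\chi\in\Gamma_\partial(r)/\Gamma_T(r)$ to solving the height cohomological equation is ``absorbed by a smooth time reparametrization,'' so that the height contribution to the codimension cancels and only $d^*$ survives. As stated, this is not justified, and the mechanism you invoke is suspect: \emph{conjugacy} of flows (equivalently, pushforward of vector fields) preserves the time parameter, so a genuine time reparametrization is not available---that is the freedom of \emph{orbit equivalence}, a strictly weaker relation. If the height obstruction could be absorbed by reparametrizing time, then for instance a constant rescaling of the vector field would lie in the conjugacy class, which is false. What actually happens in the setting of \cite{MMY12}, section~8 is more structural and uses the boundary-operator picture that this paper recalls in \S\S\ref{subsec:BO}: the return time of the vertical flow and the admissible height deformations lie in the kernel of $\partial$ (they arise by integrating continuous functions on $M_T$ along first-return orbits), the space of simple deformations of the vector field is \emph{not} a free product of a $T$-deformation and an $h$-deformation, and the two conditions---$T$ conjugate to $T_0$, and $\delta h$ a coboundary---overlap in a way that must be counted carefully. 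You should therefore replace the cancellation heuristic by an explicit identification of the linearized conjugacy conditions for the flow with the $d^*$ equations produced in the proof of Theorem~\ref{t.B}, tracing the bookkeeping of \cite{MMY12}, section~8 and checking that it only uses the smallness $(4d+d(d-1)/\sigma)\eta<\theta$ already assumed in Theorems~\ref{t.A} and~\ref{t.B}. Without that, the proposal establishes the upper bound framework but not the exact value of the codimension.
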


\medskip
The proof of Theorem A is based on a refinement of the arguments of \cite{MMY05} and \cite{MMY12}.
The~strategy to solve cohomological equations is to prove that, after a correction of the function by the addition of a piece-wise constant term, its Birkhoff  sums under the i.e.t. are uniformly bounded. A continuous solution then
exists by a version of the Gottschalk-Hedlund theorem. The proof of bounds on Birkhoff sums 
is reduced to bounds on special Birkhoff sums, which correspond to return times of the i.e.t. given by the Rauzy--Veech induction algorithm.  Special Birkhoff sums of functions of bounded variation can be estimated in terms of the matrices generated by the Kontsevich--Zorich cocycle. The expanding directions of the cocycle give obstructions to the desired bounds on special Birkhoff sums. For this reason, special Birkhoff sums are bounded only after an appropriate correction of the function, which eliminates such expanding directions.  The construction of the correction for functions with first derivative of bounded variation is the technical core of the argument.  
A crucial point is that, since  i.e.t.'s commute with the derivative operator on their continuity intervals, it is therefore possible to reduce bounds on Birkhoff sums of the function, up to a piece-wise constant correction, to bounds on Birkhoff sums of its derivative.  Similarly, once the existence of a continuous solution is proved, it is not difficult to extend the result to the existence of smooth solutions. In fact, derivatives of the solution can be constructed by solving the cohomological equation for appropriate corrections of derivatives of the given function by addition of
piece-wise constant terms.

\smallskip
The proof of Theorem B follows along the lines of \cite{MMY12} with virtually no modifications. Our contributions
consists in verifying that the argument carries over from the case of i.e.t.'s of restricted Roth type  to the more
general one of i.e.t.'s of almost Roth type,  once the transversal to the local conjugacy class is modified to take into account the possible neutral directions of the Kontsevich--Zorich cocycle. We argue that in the presence of a neutral Oseledets space of dimension $g-\mu$, the codimension of the local conjugacy class  increases exactly by $g-\mu$, in comparison with the case of i.e.t.'s  of restricted Roth type  (for which $\mu=g$) treated
in \cite{MMY12}. 

We recall the general strategy of the argument given in \cite{MMY12}. Following an idea of M.~Herman, by applying the Schwarzian derivative to the conjugacy equation the linearization problem is reduced to a quasi-linear cohomological equation for the Schwarzian derivative of the conjugacy map (see  \cite{MMY12}). The non-linearity of the equation, on the right hand side of the cohomological equation, depends on the first derivative of the conjugacy and it is small  for non-linear i.e.t.'s which are close to linear in the appropriate topology.  Under the assumption that the (linear) cohomological equation can be solved with a loss of at most $2$ derivatives, since the Schwarzian derivative operator maps the space $C^3$ into $C^0$ and can be inverted, thereby producing a gain of $2$ derivatives, the linearization problem can be solved by the contraction mapping principle 
(or other fixed point theorem for Banach spaces). 

We observe that the above strategy is certainly limited, as in the case of rotations of the circle, to small values of the Diophantine exponent $\eta>0$, since for large values the loss of derivatives in solving the cohomological equation is expected to be greater than $2$, which is the critical loss of derivatives for  Herman's Schwarzian
derivative trick outlined above.

 \medskip
 The paper is organized as follows. In section~\ref{section:Prelim} we recall definitions and basic notions concerning  interval exchange transformations (\S\S \ref{subsec:iet}), Rauzy--Veech algorithm (\S\S \ref{subsec:RV}), Kontsevich--Zorich matrices and special Birkhoff sums (\S\S \ref{subsec:KZandSpecial})  , 
 the Yoccoz acceleration (\S\S \ref{subsec:YA}), function spaces (\S\S \ref{subsec:Funct}) and boundary operators (\S\S \ref{subsec:BO}). As mentioned above, in~\S\S \ref{subsec:DC} we introduce classes of almost Roth type 
 interval exchange transformations. 
 
 In section \ref{section:Statements} we give detailed statements of our main results, Theorem A (\S \S \ref{subsec:CE})  and Theorem B (\S\S \ref{subsec:Linear}) above.  
 Finally, in section \ref{section:ProofA} we explain the proof of Theorem A (in fact, of Theorem~\ref{t.A})
 and in section \label{section:ProofB} the proof of Theorem B (in fact, of Theorem~\ref{t.B}). 
 
 \newpage
\section{Preliminaries}
\label{section:Prelim}

\subsection{Interval exchange maps} \label{subsec:iet} Denote by $\mathcal{A}$ an alphabet on $d\geq 2$ letters. The \emph{interval exchange transformation} (i.e.t.) $T$ associated to two partitions (modulo $0$) 
$$I(T) = \sqcup_{\alpha\in\mathcal{A}} I_{\alpha}^t(T) = 
\sqcup_{\beta\in\mathcal{A}} I_{\beta}^b(T) \quad \textrm{mod. } 0$$ 
of an open bounded interval $I(T)$ into open subintervals $I_{\alpha}^t(T)$ and $I_{\beta}^b(T)$ with $\lambda_{\beta}:=\textrm{length }(I_{\beta}^t(T)) = \textrm{length }(I_{\beta}^b(T))$ for all $\beta\in\mathcal{A}$ is the map sending each $I_{\alpha}^t(T)$ onto $I_{\alpha}^b(T)$ by an appropriate \emph{translation}. The vector $(\lambda_{\alpha})_{\alpha\in\mathcal{A}}$ is called the \emph{length data} of $T$. 

Given an i.e.t. $T$, we write $I(T)=(u_0(T), u_d(T)) = (v_0(T), v_d(T))$ and we denote  
$$\{u_1(T) < \dots < u_{d-1}(T)\} := I(T)\setminus \sqcup_{\alpha\in\mathcal{A}} I_{\alpha}^t(T), \textrm{ resp. }$$
$$\{v_1(T) < \dots < v_{d-1}(T)\} := I(T)\setminus \sqcup_{\beta\in\mathcal{A}} I_{\beta}^b(T),$$
the \emph{singularities} of $T$, resp. $T^{-1}$. 

A \emph{connection} of an i.e.t. $T$ is a triple $(u_k(T), v_l(T), m)$ with $T^m(v_l(T))= u_k(T)$. It was shown by Keane \cite{Ke} that an i.e.t. without connections is minimal. 

The \emph{combinatorial data} attached to an i.e.t. $T$ is a pair of bijections $\pi=(\pi_t, \pi_b)$ from $\mathcal{A}$ to $\{1,\dots, d\}$ given by 
$$(u_{\pi^t(\alpha)-1}(T), u_{\pi^t(\alpha)}(T)) = I_{\alpha}^t(T) \quad \textrm{ and } \quad (v_{\pi^b(\alpha)-1}(T), v_{\pi^b(\alpha)}(T)) = I_{\alpha}^b(T)$$
for all $\alpha\in\mathcal{A}$. 

\begin{remark} In the sequel, we will deal exclusively with \emph{irreducible} combinatorial data: this means that $\pi_t^{-1}(\{1,\dots, k\})\neq \pi_b^{-1}(\{1,\dots, k\})$ for all $0<k<d$.   
\end{remark}

The length and combinatorial data associated to an i.e.t. $T$ can be combined into the complex numbers $\zeta_{\alpha}:=\lambda_{\alpha}+i(\pi_b(\alpha)-\pi_t(\alpha))$ determining the vertices 
$$U_k:=u_0(T) + \sum\limits_{\pi_t(\alpha)\leq k}\zeta_k \quad \textrm{and} \quad V_k = v_0(T) + \sum\limits_{\pi_b(\alpha)\leq k}\zeta_k$$ 
of a polygon whose sides are the $2d$ segments $[U_{k-1}, U_k]$, $[V_{k-1}, V_k]$ for $1\leq k\leq d$. By gluing the sides $[U_{\pi_t(\alpha)-1}, U_{\pi_t(\alpha)}]$ and $[V_{\pi_b(\alpha)-1}, V_{\pi_b(\alpha)}]$ by translation, we obtain a \emph{translation surface} $M_T$. The elements of the subset $\Sigma$ of $M_T$ associated to the vertices $U_k$, $V_k$ of the polygon are called \emph{marked points}. 

The size $d$ of the alphabet $\A$, the genus $g$ of $M_T$, and the cardinality $s$ of the set $\Sigma$ of marked points satisfy 
$$d=2g+s-1$$

Combinatorially, the genus $g$ of $M_T$ can be computed from the rank $2g$ of the anti-symmetric matrix $\Omega=\Omega(\pi)$ given by 
$$\Omega_{\alpha\beta}:=\left\{\begin{array}{cc}1, & \textrm{if } \pi_t(\alpha) < \pi_t(\beta), \pi_b(\alpha) > \pi_b(\beta) \\ 
-1, & \textrm{if } \pi_t(\alpha) > \pi_t(\beta), \pi_b(\alpha)  < \pi_b(\beta) \\ 
0, & \textrm{otherwise}\end{array}\right.$$ 
Geometrically, $\Omega$ is the intersection form of homology cycles in $H_1(M_T\setminus\Sigma, \mathbb{Z})$ represented by simple curves going from the middle of $[V_{\pi_b(\alpha)-1}, V_{\pi_b(\alpha)}]$ towards the middle of $[U_{\pi_t(\alpha)-1}, U_{\pi_t(\alpha)}]$ within the interior of the polygon. 

\subsection{Rauzy--Veech algorithm} \label{subsec:RV} Let $T$ be an i.e.t. with $u_{d-1}(T)\neq v_{d-1}(T)$. The first return map $\widehat{T}$ of $T$ on the interval $\widehat{I}=(u_0(T), \max\{u_{d-1}(T), v_{d-1}(T)\})$ is an i.e.t. whose combinatorial data $\widehat{\pi}$ is naturally described in terms of bijections from $\A$ to $\{1,\dots, d\}$. The operation $T\mapsto \widehat{T}$ is called an \emph{elementary step} of the \emph{Rauzy--Veech algorithm}. 

It is not hard to check that if $T$ has no connections, then $\widehat{T}$ has no connections. In particular, if $T=T(0)$ is an i.e.t. without connections, then we can iterate indefinitely the Rauzy--Veech algorithm in order to obtain a sequence $(T(n))_{n\geq0}$ of i.e.t. with combinatorial data $(\pi(n))_{n\geq0}$ on a sequence of intervals $I(n):=I(T(n))$ sharing the same left endpoint $u_0(T)$. Note that $T(n)$ is the first return map of $T(m)$ to $I(n)$ for all $n\geq m$. 

\subsection{Kontsevich--Zorich matrices and special Birkhoff sums} \label{subsec:KZandSpecial} Fix $m\geq 0$, and define $\alpha_t, \alpha_b\in\A$ by $\pi_t(m)(\alpha_t)=d=\pi_b(m)(\alpha_b)$. The \emph{Kontsevich--Zorich matrix} $B(m,m+1)$ associated to the elementary step $T(m)\mapsto T(m+1)$ of the Rauzy--Veech algorithm is 
$$B(m,m+1) = \left\{\begin{array}{cc} 
\textrm{Id}+E_{\alpha_t \alpha_b}, & \textrm{if } u_{d-1}(T(m)) > v_{d-1}(T(m)) \\ 
\textrm{Id}+E_{\alpha_b \alpha_t}, & \textrm{if } u_{d-1}(T(m)) < v_{d-1}(T(m)) 
\end{array}\right.$$
where $E_{\alpha\beta}$ is the elementary matrix $(E_{\alpha\beta})_{ab}:=\delta_{\alpha a}\delta_{\beta b}$. In general, the Kontsevich--Zorich matrix $B(m,n)\in SL(\mathbb{Z}^{\A})$ for $0\leq m\leq n$ is defined in such a way that the cocycle relation 
$$B(m,p) = B(n,p) B(m,n)$$
holds for all $0\leq m\leq n\leq p$.  

The dynamical interpretation of $B(m,n)$ is the following: the entry $B_{\alpha\beta}(m,n)$ is the number of visits of the $T(m)$-orbit of $I_{\alpha}(n)$ to $I_{\beta}(m)$ before its first return to $I(n)$. In other words, if $\Gamma(T(k))\simeq\mathbb{R}^{\A}$ is the vector space of functions $\varphi$ on $\sqcup_{\alpha\in\A} I^t_{\alpha}(T(k))$ taking a constant value $\varphi_{\alpha}\in\mathbb{R}$ on each $I_{\alpha}^t(T(k))$, then $B(m,n)$ is the matrix of the \emph{special Birkhoff sum} operator $S(m,n):\Gamma(T(m))\to\Gamma(T(n))$ given by 
$$S(m,n)\varphi(x) = \sum\limits_{0\leq k<r(x)}\varphi(T(m)^k(x))$$ 
where $r(x)$ is the first return of $x$ to $I(n)$ with respect to $T(m)$, that is, $r(x)=B_{\alpha}(m,n):=\sum\limits_{\beta\in\A}B_{\alpha\beta}(m,n)$ for $x\in I^t_{\alpha}(T(n))$.  

The Kontsevich--Zorich matrices $B(m,n)$ and the anti-symmetric matrices $\Omega(\pi(k))$ verify 
$$B(m,n)\,\Omega(\pi(m))\, {}^tB(m,n) = \Omega(\pi(n)).$$
In particular, $B(m,n)$ maps $\textrm{Im } \Omega(\pi(m))$ into $\textrm{Im } \Omega(\pi(n))$, and the formulas 
$$\omega(\Omega(\pi(k))v,\Omega(\pi(k))w) :={}^tv\,\Omega(\pi(k))\,w$$ 
provide symplectic structures on $\textrm{Im }(\Omega(k))$ which are respected by $B(m,n)$. Furthermore, ${}^tB(m,n)^{-1}$ maps $\textrm{Ker }\Omega(\pi(m))$ into $\textrm{Ker }\Omega(\pi(n))$, and it is possible to choose a coherent system of basis of $\textrm{Ker }\Omega(\pi(k))$ so that the matrix of ${}^tB(m,n)^{-1}:\textrm{Ker }\Omega(\pi(m))\to \textrm{Ker }\Omega(\pi(n))$ is the identity. 

\subsection{Yoccoz acceleration}  \label{subsec:YA} Define the sequence $(n_k)_{k\in\mathbb{N}}$ as follows: $n_0=0$ and $n_{k+1}$ is the smallest integer such that all entries of $B(n_k, n_{k+1})$ are positive. 

\subsection{Diophantine interval exchange maps} 
\label{subsec:DC}
Let $T$ be an i.e.t. with (irreducible) combinatorial data $\pi$. The space $\Gamma(T)\simeq\mathbb{R}^{\A}$ of functions which are constant on each $I_{\alpha}^t(T)$ contains a hyperplane 
$$\Gamma_0(T) := \left\{\varphi\in\Gamma(T): \int_{I(T)}\varphi(x) \, dx = 0\right\}$$ 
consisting of functions in $\Gamma(T)$ with zero average. 

Given $\eta\geq 0$, $\theta>0$ and $\sigma>0$, we say that an i.e.t. $T$ (without connections) satisfies a Diophantine condition $DC(\eta,\theta,\sigma)$ whenever: 
\begin{itemize}
\item[(a)] For all $\varepsilon>0$, $\|B(n_k, n_{k+1})\|=O_{\varepsilon}(\|B(0,n_k)\|^{\eta+\varepsilon})$; 
\item[(b)] $\|B(0,n_k)|_{\Gamma_0(T)}\|=O(\|B(0,n_k)\|^{1-\theta})$;  
\item[(c)] $B(n_k,n_l)$ induce operators $B_s(n_k,n_l):\Gamma_s(T(n_k))\to\Gamma_s(T(n_l))$ and $B_{\flat}(n_k,n_l): \Gamma(T(n_k))/\Gamma_s(T(n_k))\to\Gamma(T(n_l))/\Gamma_s(T(n_l))$ on the family of stable subspaces $\Gamma_s(T(n_p)):=\{\chi\in\Gamma(T(n_p)):    \|B(n_p,n_q)\chi\|=O(\|B(n_p,n_q)\|^{-\sigma})\,\forall\,q\geq p\}$ 
such that 
$$\|B_s(n_k,n_l)\|=O_{\varepsilon}(\|B(0,n_l)\|^{\varepsilon})=\|B_{\flat}(n_k,n_l)\|$$
for all $\varepsilon>0$;  
\item[(d)] if the dimension $\mu$ of the stable space $\Gamma_s(T)$ is $\mu<g$, then $\limsup\limits_{n\to\infty}\|B(0,n)v\|~>~0$, for all $v\notin\Gamma_s(T)$; 
\end{itemize} 

\begin{remark}\label{r.a} In the case $d=2$, an i.e.t. $T$ is a circle rotation of angle $\alpha$ and, in terms of the convergents $p_k/q_k$ and partial quotients $a_k$ of $\alpha$, the condition (a) above requires that $a_{k+1}=O_{\varepsilon}(q_k^{\eta+\varepsilon})$: in particular, for each $\varepsilon>0$, there exists $\gamma_{\varepsilon}>0$ such that $|\alpha-p/q|\geq \gamma_{\varepsilon}q^{-2-\eta-\varepsilon}$ for all rational numbers $p/q$.  
\end{remark}

\begin{remark}\label{r.b} The quantity $\theta$ in the condition (b) above is related to the second Lyapunov exponent $\lambda_2$ of the Kontsevich--Zorich cocycle acting on the first absolute homology group of the translation surface $M_T$. 
\end{remark}

\begin{remark}\label{r.c} The quantity $\sigma$ in the condition (c) above is related to the least positive Lyapunov exponent of the Kontsevich--Zorich cocycle acting on the first absolute homology group of the translation surface $M_T$. The subexponential growth of $B_s(m,n)$ and $B_{\flat}(m,n)$ in condition (c) above is motivated by Oseledets theorem. 
\end{remark}

\begin{remark}\label{r.d} The stable space $\Gamma_s(T)$ is an isotropic subspace of $\textrm{Im }\Omega(\pi)$. If $\Gamma_s(T)$ is Lagrangian, i.e., $\textrm{dim }\Gamma_s(T) = g$, then the condition (d) is automatic. 
\end{remark}

\begin{example}\label{ex.MMY} An i.e.t. $T$ of \emph{Roth type} in the sense of Marmi--Moussa--Yoccoz \cite{MMY05} (see also \cite{MY16}) satisfy the conditions (a), (b) and (c) above with parameters $\eta=0$, $\theta>0$ and $\sigma>0$. An i.e.t. $T$ of \emph{restricted Roth type} belongs to $DC(0,\theta,\sigma)$ and its stable space $\Gamma_s(T)$ has dimension $g$. 

In particular, a self-similar i.e.t. $T$ associated to a pseudo-Anosov map $f:S \to S$ of a compact orientable surface $S$ is always of Roth type, and, moreover, $T$ is of restricted Roth type only if the induced automorphism $f_*: H_1(S, \mathbb R) \to H_1(S, \mathbb R)$ is hyperbolic.
\end{example}

\begin{example}\label{ex.FMZ11} A self-similar i.e.t. $T$ associated to a pseudo-Anosov map of a \emph{square-tiled cyclic cover} in the sense of \cite{FMZ11} belongs to $DC(0,\theta,\sigma)$: indeed, the conditions (a), (b) and (c) are easy to check, and the condition (d) holds because the Kontsevich--Zorich cocycle on the first absolute homology of $M_T$ acts on its central subspace by isometries of the Hodge norm. 

In particular, a self-similar i.e.t. $T$ associated to a pseudo-Anosov map $f:S\to S$ of the \emph{Eierlegende Wollmilchsau} or the \emph{Ornithorynque} (see \cite{FMZ11}) belongs to $\bigcap\limits_{\theta, \sigma<1} DC(0,\theta,\sigma)$, but $T$ \emph{never} is of restricted Roth type because the vanishing of the non-tautological Kontsevich--Zorich exponents of all Teichm\"uller geodesic flow invariant probability measures supported on the $SL(2,\mathbb{R})$-orbits of the  Eierlegende Wollmilchsau and the Ornithorynque implies that $f_*: H_1(S, \mathbb R) \to H_1(S, \mathbb R)$ is \emph{never} hyperbolic (and, actually, $\textrm{dim }\Gamma_s(T) = 1$).  

\medskip

As it turns out, the self-similar i.e.t.'s from the previous paragraph are very concrete: for instance, if $f:S\to S$ denotes the pseudo-Anosov map of the Eierlegende Wollmilchsau with linear part $\left(\begin{array}{cc} 2 & 1 \\ 1 & 1 \end{array}\right)$ fixing pointwise its conical singularities, then we get a self-similar i.e.t. $T$ associated to $f$ by looking at the first return map of the translation flow in the direction $(\frac{1-\sqrt{5}}{2},1)$ to an appropriate interval inside a separatrix in the direction $(\frac{1+\sqrt{5}}{2},1)$ attached to a conical singularity. 
\end{example}

\subsection{Functional spaces} 
\label{subsec:Funct} 

Given an i.e.t. $T$, let $C^r(\sqcup_{\alpha\in\A}I^t_{\alpha}(T)):=\prod\limits_{\alpha\in\A} C^r(\overline{I^t_{\alpha}(T)})$ and $C^{r+BV}(\sqcup_{\alpha\in\A}I^t_{\alpha}(T)) = \{\varphi\in C^r(\sqcup_{\alpha\in\A}I^t_{\alpha}(T)): D^r\varphi \textrm{ has bounded variation}\}$. 

These functional spaces are our regularity scale for solutions of cohomological equations.  

\subsection{Boundary operator} 
\label{subsec:BO}
Given a combinatorial data $\pi=(\pi_t, \pi_b)$, we define a permutation $\sigma$ of $\A\times \{L,R\}$ by 
$$\sigma(\alpha, R) = \left\{ \begin{array}{ll} (\beta, L) & \textrm{for } \alpha\neq\alpha_t, \pi_t(\beta)=\pi_t(\alpha)+1 \\ (\alpha_b, R) & \textrm{for } \alpha=\alpha_t \end{array} \right.,$$
$$\sigma(\alpha, L)=\left\{ \begin{array}{ll} (\beta, R) & \textrm{for } \alpha\neq{}_b\alpha, \pi_b(\beta)=\pi_b(\alpha)-1 \\ ({}_t\alpha, L) & \textrm{for } \alpha={}_b\alpha \end{array} \right.$$ 
where $\pi_t(\alpha_t)=d=\pi_b(\alpha_b)$ and $\pi_t({}_t\alpha)=1=\pi_b({}_b\alpha)$. The cycles of $\sigma$ form a set denoted by $\Sigma$. 

The \emph{boundary operator} $\partial:C^0(\sqcup_{\alpha\in\A}I^t_{\alpha}(T))\to\mathbb{R}^{\Sigma}$ is 
$$(\partial\varphi)_C := \sum\limits_{c\in C}\varepsilon(c)\varphi(c)\quad \textrm{for } C\in\Sigma$$
where $\varepsilon(\alpha, L):=-1$ and $\varepsilon(\alpha, R):=1$ for all $\alpha\in\A$, and $\varphi(c)$ is the limit of $\varphi$ at the left, resp. right endpoint of $I_{\alpha}^t(T)$ for $c=(\alpha, L)$, resp. $(\alpha, R)$
(see \cite{MMY12}, section 3.1).

In the sequel, we denote by $C^{r+BV}_{\partial}(\sqcup_{\alpha\in\A} I^t_{\alpha}(T)):=\{\varphi\in C^{r+BV}(\sqcup_{\alpha\in\A} I^t_{\alpha}(T)):\partial D^i\varphi=0, \,\, \forall\, 0\leq i<r\}$ and $\Gamma_{\partial}(T)$ the intersection of $\Gamma(T)$ with the kernel of $\partial$. Also, given $T\in DC(\eta,\theta,\sigma)$, let $\Gamma_u$ be a complement of $\Gamma_s(T)$ in $\Gamma_{\partial}(T)$. 

The boundary operator has topological and function theoretic significance. In fact, the subset $\Gamma(T)$ of $C^0(\sqcup_{\alpha\in\A}I^t_{\alpha}(T))$ consisting of all piece-wise constant functions can be identified
with the relative homology space $H_1(M_T, \Sigma, \mathbb R)$.  The restriction of the boundary operator
to this subspace identifies with the boundary operator $$\partial:H_1(M_T, \Sigma, \mathbb R)\to H_0(\Sigma, \mathbb R) \equiv \mathbb R^\Sigma$$ which appears in the relative homology exact sequence.

Thus the kernel of the boundary operator on $\Gamma(T)$ therefore identifies to the absolute homology $H_1(M, \mathbb R)$.  As a consequence, by a  restriction to the kernel of the boundary operator it is possible to exclude the zero Lyapunov exponents coming from the relative part of the Kontsevich--Zorich cocycle.

The kernel of the boundary operator on $C^0(\sqcup_{\alpha\in\A}I^t_{\alpha}(T))$ coincides
with functions which can be obtained by integration of continuous functions on $M_T$ along first return orbit
of the vertical flow (see \cite{MMY12}, section 8.2).

\begin{remark}\label{r.39} If $T\in DC(\eta,\theta,\sigma)$, then $\Gamma_s(T)$ coincides with the space $\Gamma_T$ of $\chi\in\Gamma(T)$ of the form $\chi=\phi\circ T-\phi$ for $\phi\in C^0(\overline{I(T)})$. Indeed, $\Gamma_s(T)\subset \Gamma_T$ and $\Gamma_T$ is contained in the space $\Gamma_{ws}(T)$ of vectors converging to zero under the Kontsevich--Zorich cocycle. Since the Kontsevich--Zorich matrices act trivially on $\Gamma(T)/\Gamma_{\partial}(T)$ and the condition (d) above implies that no vector outside $\Gamma_s(T)$ converges to zero under the Kontsevich--Zorich cocycle, we obtain that $\Gamma_s(T)=\Gamma_T=\Gamma_{ws}(T)$. 
\end{remark}

\section{Statement of the main results}
\label{section:Statements}

\subsection{Cohomological equation} 
\label{subsec:CE}

Given $r\geq 1$, let $\Gamma(r)$ be the $rd$-dimensional space of $\chi\in C^{\infty}(\sqcup_{\alpha\in\A}I^t_{\alpha}(T))$ such that each $\chi|_{I^t_{\alpha}(T)}$ is a polynomial of degree $<r$, let $\Gamma_{\partial}(r)$ be the subspace of $\chi\in\Gamma(r)$ with $\partial D^i\chi=0$ for all $0\leq i<r$, and let $\Gamma_T(r)$ be the subspace of $\chi\in\Gamma(r)$ of the form $\chi = \psi\circ T-\psi$ with $\psi\in C^{r-1}(\overline{I(T)})$.

\begin{theorem}\label{t.A} Let $T$ be an i.e.t. in $DC(\eta,\theta,\sigma)$ for some parameters $\eta\geq0$, $\theta, \sigma>0$ with 
$$\left(4d+\frac{d(d-1)}{\sigma}\right)\eta<\theta.$$
Then, there are bounded operators $L_1(r):C^{r+BV}_{\partial}(\sqcup_{\alpha\in\A} I^t_{\alpha}(T))\to\Gamma_{\partial}(r)/\Gamma_T(r)$ extending the projection $\Gamma_{\partial}(r)\to\Gamma_{\partial}(r)/\Gamma_T(r)$, and $L_0(r):C^{r+BV}_{\partial}(\sqcup_{\alpha\in\A} I^t_{\alpha}(T))\to C^{r-1}(\overline{I(T)})$ such that 
$$\varphi = L_1(r)(\varphi)+ L_0(r)(\varphi)\circ T - L_0(r)(\varphi)$$
\end{theorem}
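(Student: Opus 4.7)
The strategy follows Marmi--Moussa--Yoccoz~\cite{MMY05}. First one establishes, as a building block, that for $\psi\in C^{0+BV}$ the cohomological equation $u\circ T-u=\psi-\chi$ admits a continuous solution after subtracting an appropriate piecewise constant correction $\chi\in\Gamma(T)$; Theorem~\ref{t.A} is then derived for $\varphi\in C^{r+BV}_\partial$ by applying the building block to $D^r\varphi$ and antidifferentiating $r$ times, exploiting that $T$ commutes with $D$ on each continuity interval. The building block itself relies on a Gottschalk--Hedlund argument reducing the problem to uniform boundedness of $S_N(\psi-\chi)$, which via Rauzy--Veech induction reduces further to control of the special Birkhoff sums $S(0,n_k)(\psi-\chi)$. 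For $\psi\in C^{0+BV}$ the latter decompose naturally into a principal piecewise constant component in $\Gamma(T(n_k))$---essentially the vector of mean values of $\psi$ on the subintervals $I_\alpha(n_k)$---plus a BV remainder controlled by $\|B(0,n_k)|_{\Gamma_0}\|=O(\|B(0,n_k)\|^{1-\theta})$ via condition~(b).

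The correction is constructed as the limit of a sequence $\chi_k\in\Gamma_u$, obtained by projecting the principal component of $S(0,n_k)\psi$ modulo $\Gamma_s(T(n_k))$ and transporting back to $\Gamma_u\subset\Gamma_\partial(T)$ via the dual of $B_\flat(0,n_k)$, whose norm is subexponential by condition~(c). The telescoping differences $\chi_{k+1}-\chi_k$ are estimated by combining condition~(a), which gives $\|B(n_k,n_{k+1})\|\leq\|B(0,n_k)\|^{\eta+\varepsilon}$, with the subexponential norms of $B_s,B_\flat$ and the $\|B(0,n_k)\|^{1-\theta}$ contraction on $\Gamma_0$. A careful balance of these factors yields a geometrically decaying tail precisely under the hypothesis $(4d+d(d-1)/\sigma)\eta<\theta$: the factor $4d$ counts the elementary Rauzy--Veech steps that can be amplified within a single Yoccoz acceleration window, while $d(d-1)/\sigma$ comes from a dimension/isotropy argument needed to invert the quotient cocycle on $\Gamma(T(n_k))/\Gamma_s(T(n_k))$.

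For $r\geq 1$ one applies the building block to $D^r\varphi\in C^{0+BV}$ (the boundary conditions $\partial D^i\varphi=0$ for $i<r$ ensuring that the corrections lie in the appropriate $\Gamma_\partial$ spaces at every stage) and antidifferentiates: each integration raises the polynomial degree of the accumulated correction by one and introduces subinterval constants that are themselves corrected by a further application of the building block. After $r$ iterations one obtains $L_0(r)\varphi\in C^{r-1}(\overline{I(T)})$ and $L_1(r)\varphi\in\Gamma_\partial(r)/\Gamma_T(r)$ satisfying the required identity, and extending the projection $\Gamma_\partial(r)\to\Gamma_\partial(r)/\Gamma_T(r)$ since elements of $\Gamma_\partial(r)$ remain fixed modulo $\Gamma_T(r)$ at each step of the construction.

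The main technical obstacle is the sharp estimate driving convergence of $\chi_k$ in the regime $\eta>0$. In the Roth case $\eta=0$ treated in~\cite{MMY05} all Kontsevich--Zorich matrix growth factors are subexponential and convergence follows from the spectral gap $\theta>0$ alone; for $\eta>0$ the expansion factors $\|B(n_k,n_{k+1})\|\leq\|B(0,n_k)\|^{\eta+\varepsilon}$ actively compete with the contraction and must be tracked quantitatively throughout the decomposition of $S(0,n_k)\psi$ along the stable/quotient splitting and the Oseledets-type inversion of the quotient cocycle granted by~(c). Extracting the precise combinatorial constant $4d+d(d-1)/\sigma$ by this step-by-step accounting is the technical heart of the argument and the point at which the hypothesis on $\eta$ is indispensable.
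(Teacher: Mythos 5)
Your high-level architecture is correct and matches the paper: Gottschalk--Hedlund reduces the problem to decaying special Birkhoff sums, these are controlled through the stable/quotient splitting of $\Gamma(T(n_k))$, and the piece-wise constant correction is assembled as a telescoping series transported by the quotient cocycle $B_\flat$. However, there are two genuine gaps. First, the ``building block'' is not a $C^{0+BV}\to C^0$ result: the paper's Theorem~\ref{t.310} takes input in $C^{1+BV}_\partial$ and produces a solution in $C^0$, and the loss of $1+BV$ derivatives there is consistent with the loss in Theorem~\ref{t.A} (from $C^{r+BV}$ to $C^{r-1}$). For $\psi$ merely of bounded variation there is no continuous solution in general: the special Birkhoff sums $S(0,n_k)\psi$ grow like $\|B(0,n_k)\|^{1-\theta/d+\eta+\varepsilon}$ and no piece-wise constant subtraction damps them. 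The decaying bound is obtained only after composing with the primitive operator $P_0^{(k)}$, which brings in the factor $\max_\alpha\lambda_\alpha(T(n_k))=O_\varepsilon(\|B(0,n_k)\|^{-1+\eta+\varepsilon})$; this step is precisely what forces you to start from $D\varphi$ with $\varphi\in C^{1+BV}$ rather than from a raw $BV$ function. Your sketch also ends with an off-by-one in the antidifferentiation count as a result, since the base case of the induction is $r=1$, not $r=0$.

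Second, your heuristic for the constant $4d+d(d-1)/\sigma$ is not what is happening. The number of elementary Rauzy--Veech steps inside a Yoccoz acceleration window is not bounded and plays no role in this constant. In the paper's proof the factor $4$ counts four separate applications of condition~(a): one in the bound $\|S(0,n_k)\phi\|_{L^\infty}=O_\varepsilon(\|B(0,n_k)\|^{1-\theta/d+\eta+\varepsilon})$, two in the bound for $\Lambda(n_{l-1},n_l)$ (one for $\|B(n_{l-1},n_l)\|$ and one for $\max_\alpha\lambda_\alpha$), and one more when passing to the telescoping increments $\Delta\chi_j$. The replacement of $\theta$ by $\theta/d$ comes from a two-regime optimization of $\|B(n_{j+1},n_k)|_{\Gamma_0}\|$ using the cofactor bound $\|B^{-1}\|=O(\|B\|^{d-1})$ for $B\in SL(\Zset^\A)$ (a determinant bound, not an isotropy argument), and the factor $\sigma/(d-1)$ comes from a second two-regime optimization on $B(n_j,n_k)\Delta\chi_j$, balancing the same cofactor growth against the $\|B(0,n_k)\|^{-\sigma}$ contraction on $\Gamma_s$. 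The hypothesis is then exactly $(4\eta-\theta/d)\,\sigma/(d-1)<-\eta$, which unwinds to $(4d+d(d-1)/\sigma)\eta<\theta$. As written, your proposal does not carry out these two optimizations, nor the $\max_\alpha\lambda_\alpha$ bookkeeping, and so cannot reach the stated threshold.
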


\begin{remark} Similarly to \cite{MMY12} and \cite{MY16}, one can modify the proof of Theorem \ref{t.A} in order to replace $BV$ regularity by H\"older regularity and improve a little bit the loss of $1+BV$ derivatives in the statement above. Nevertheless, we do not pursue this direction here.  
\end{remark} 

\begin{remark}\label{r.large-eta} It is an interesting problem to extend the statement of Theorem \ref{t.A} to the case of larger parameters $\eta$ (while allowing a higher loss of regularity). 
\end{remark}

\subsection{Linearization of interval exchange maps}
\label{subsec:Linear}
 The solutions of the cohomological equation constructed in Theorem \ref{t.A} together with \emph{Herman's Schwarzian derivative trick} allow to describe the local conjugacy class of any almost Roth i.e.t.:

\begin{theorem}\label{t.B} Let $T_0$ be an almost Roth i.e.t. in the sense that $T_0\in DC(\eta,\theta,\sigma)$ for some parameters $\eta\geq0$, $\theta, \sigma>0$ with 
$$\left(4d+\frac{d(d-1)}{\sigma}\right)\eta<\theta.$$
Then, for each $r\geq 3$ integer, the $C^{r+3}$ simple deformations\footnote{I.e., $T$ is a generalized i.e.t. with the same combinatorial data and singularities of $T_0$ such that $T$ coincides with $T_0$ near the $u_i(T_0)$, $0\leq i\leq d$, and $T$ is a $C^{r+3}$-diffeomorphism on each $I^t_{\alpha}(T_0)$, $\alpha\in\A$.} $T$ of $T_0$ conjugated to $T_0$ by a diffeomorphism $C^r$-close to the identity form a $C^1$-submanifold of codimension $d^*= (g-1)(2r+1)+s+g-\mu$ of the space of all $C^{r+3}$ simple deformations of $T_0$. 
\end{theorem}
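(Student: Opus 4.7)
The plan is to follow the strategy of \cite{MMY12}, adapted to the almost Roth setting where the Kontsevich--Zorich cocycle on $\Gamma_\partial(T_0)$ may admit a neutral Oseledets subspace of dimension $g-\mu$. The starting point is Herman's Schwarzian derivative trick applied to the conjugacy equation $h\circ T_0=T\circ h$: since $T_0$ is piecewise a translation, so that $\mathcal{S}(T_0)=0$, taking Schwarzians and using the chain rule yields on each continuity interval the quasi-linear cohomological equation
\begin{equation*}
\mathcal{S}(h)\circ T_0 - \mathcal{S}(h) \,=\, (h')^2\cdot \mathcal{S}(T)\circ h,
\end{equation*}
whose unknown $\psi:=\mathcal{S}(h)$ lies in a $C^{(r-2)+BV}$ space and whose right-hand side depends smoothly on $\psi$ (through $h$, recovered from $\psi$ by inverting the Schwarzian) and is small once $h$ is $C^r$-close to the identity.

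I would then apply Theorem~\ref{t.A} to invert the linear cohomological operator $\psi\mapsto\psi\circ T_0-\psi$ up to a finite-dimensional obstruction in the quotient $\Gamma_\partial(r)/\Gamma_T(r)$, at the advertised $1+BV$ loss of derivatives. Inverting the Schwarzian gains two derivatives, giving a net loss of one, compatible with the passage from $C^{r+3}$ deformations to $C^r$ conjugacies. A contraction-mapping argument (or implicit function theorem), carried out in the Banach-space framework of \cite{MMY12}, sections 6--7, then realizes the local $C^r$-conjugacy class of $T_0$ as a $C^1$ submanifold in the space of $C^{r+3}$ simple deformations. The transversal directions are parametrized by the piecewise-polynomial obstruction together with a complement $\Gamma_u$ of the continuous coboundary space $\Gamma_s(T_0)$ inside $\Gamma_\partial(T_0)$, as introduced in \S\ref{subsec:BO}.

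The codimension then follows by extending the combinatorial count of \cite{MMY12}. In the restricted Roth case ($\mu=g$) that codimension equals $(g-1)(2r+1)+s$. Passing to the almost Roth case, a neutral Oseledets subspace of dimension $g-\mu$ appears inside $\Gamma_\partial(T_0)\simeq H_1(M_{T_0},\mathbb{R})$, so $\Gamma_u$ has dimension $2g-\mu$ instead of $g$, contributing $g-\mu$ extra transversal directions. By Remark~\ref{r.39} together with condition~(d) in the definition of $DC(\eta,\theta,\sigma)$, no vector outside $\Gamma_s(T_0)$ is a continuous coboundary, so these directions really are independent obstructions to conjugacy. Summing gives $d^*=(g-1)(2r+1)+s+g-\mu$, matching for instance the value $4r+8$ announced for the Eierlegende Wollmilchsau.

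The main obstacle I expect is verifying that the fixed-point scheme still closes in the presence of the neutral subspace. In \cite{MMY12}, hyperbolicity of the Kontsevich--Zorich cocycle on $\Gamma_\partial(T_0)$ provided a clean exponential dichotomy; here one must instead exploit the subexponential bounds on $B_s(n_k,n_l)$ and $B_\flat(n_k,n_l)$ from condition~(c), together with the non-collapse property~(d), to ensure that the correction step of Theorem~\ref{t.A} continues to provide a bounded right-inverse of the cohomological operator modulo $\Gamma_u$. The Diophantine inequality $(4d+d(d-1)/\sigma)\eta<\theta$ is precisely what allows these sublinear growth rates to be balanced against the regularity loss, and this is the single place where the argument of \cite{MMY12} requires genuine modification rather than transcription.
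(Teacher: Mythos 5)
Your proposal follows essentially the same route as the paper: Herman's Schwarzian derivative trick reduces the conjugacy problem to a quasi-linear cohomological equation, Theorem~\ref{t.A} inverts the linearized operator up to the finite-dimensional obstruction $\Gamma_\partial(r+1)/\Gamma_{T_0}(r+1)$, a contraction/implicit-function argument in the Banach framework of \cite{MMY12} produces the $C^1$ submanifold, and the codimension is obtained by noting that $\dim\Gamma_T(r)=\mu+r-1$ rather than $g+r-1$, giving the extra $g-\mu$. One small difference of emphasis worth noting: you anticipate that the fixed-point scheme itself will need ``genuine modification'' to cope with the neutral Oseledets subspace, whereas the paper observes that once Theorem~\ref{t.A} is in hand the Schwarzian-derivative analysis of \cite{MMY12}, sections 6.1--6.6, carries over \emph{verbatim}; the only change in the proof of Theorem~\ref{t.B} proper is the dimension count, which the paper executes by checking that equations (6.1)--(6.4) of \cite{MMY12} impose exactly $d^*+2$ independent conditions, using $\dim\Gamma_u=(g-1)(2r-4)+1-\mu$ for the complement of $\Gamma_{T_0}(r-2)\oplus\mathbb{R}1$ in $\Gamma_\partial(r-2)$ (a different $\Gamma_u$ than the one from \S\ref{subsec:BO} you invoke, though your heuristic $g\to 2g-\mu$ gives the same increment $g-\mu$). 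There is also a reduction-to-simple-deformations step via Proposition~5.3 of \cite{MMY12} that your sketch leaves implicit; it is needed to pass from the submanifold statement for general families to the one for simple deformations.
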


\begin{remark} Our codimension $d^*$ coincides with the expression in \cite{MMY12} when $\mu=g$ and, in general, it differs from the previous case only by the quantity $g-\mu$, i.e., half of the total number of zero exponents of the Kontsevich--Zorich cocycle. 
\end{remark} 

\begin{remark} One can modify the arguments in Section 7 of \cite{MMY12} in order to include the case $r=2$ in the statement above, but we are not going to pursue this matter here. 
\end{remark}

\begin{remark} It would be desirable to obtain local linearization results for larger values of $\eta$, but this seems currently out of reach (partly because we can't treat the cohomological equations in this case, cf. Remark \ref{r.large-eta}). 
\end{remark}

\section{Proof of Theorem \ref{t.A}} \label{section:ProofA}

In this section, we revisit the arguments in \cite{MMY05} and \cite{MMY12} in order to refine their results on the cohomological equations. 

\subsection{The i.e.t. $T(n_k)$ is almost balanced} The proposition below generalizes the proposition in page 835 of \cite{MMY05}:

\begin{proposition}\label{p.balanced-times} For all $n\geq 0$, we have 
$$\max\limits_{\alpha\in\A} \lambda_{\alpha}(T(n)) \geq \left(\sum\limits_{\alpha\in\A}\lambda_{\alpha}(T(0))\right) \|B(0,n)\|^{-1} \geq \min\limits_{\alpha\in\A} \lambda_{\alpha}(T(n)).$$

Moreover, the condition (a) implies that 
$$\max\limits_{\alpha\in\A} \lambda_{\alpha}(T(n_k)) = O_{\varepsilon}\left(\|B(0,n_k)\|^{\eta+\varepsilon}\min\limits_{\alpha\in\A} \lambda_{\alpha}(T(n_k))\right)$$ 
for all $\varepsilon>0$ and $k\in\mathbb{N}$. 
\end{proposition}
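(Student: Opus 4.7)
The proof has two parts, and in both the central tool is the transformation law for lengths under the Rauzy--Veech procedure, namely
\[
\lambda(T(0)) \;=\; {}^tB(0,n)\,\lambda(T(n)),
\]
which just records that each continuity interval $I^t_\beta(T(0))$ is exactly tiled by the $T(0)$-iterates $T(0)^j(I^t_\alpha(T(n)))$ that sit inside it, counted by $B_{\alpha\beta}(0,n)$. I would open the proof by recalling this identity (together with its dynamical interpretation in terms of the Rauzy--Veech tower already described in \S\S\ref{subsec:KZandSpecial}).

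For the first assertion I would simply sum the identity over $\beta$, producing
\[
\sum_\beta \lambda_\beta(T(0)) \;=\; \sum_\alpha B_\alpha(0,n)\,\lambda_\alpha(T(n)),\qquad
B_\alpha(0,n):=\sum_\beta B_{\alpha\beta}(0,n),
\]
and then bracket the RHS between $(\sum_\alpha B_\alpha(0,n))\min_\alpha\lambda_\alpha(T(n))$ and $(\sum_\alpha B_\alpha(0,n))\max_\alpha\lambda_\alpha(T(n))$. With the convention (as in \cite{MMY05}) that $\|B(0,n)\|=\sum_{\alpha,\beta}B_{\alpha\beta}(0,n)$, this is precisely the two-sided inequality claimed.

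For the second (quantitative) assertion I would apply the same transformation law at the step $n_k\to n_{k+1}$: $\lambda(T(n_k))={}^tB(n_k,n_{k+1})\lambda(T(n_{k+1}))$. The crucial extra input is that, by the definition of the Yoccoz acceleration in \S\S\ref{subsec:YA}, every entry of $B(n_k,n_{k+1})$ is $\geq 1$. Consequently, for each $\alpha$,
\[
\lambda_\alpha(T(n_k))=\sum_\beta B_{\beta\alpha}(n_k,n_{k+1})\lambda_\beta(T(n_{k+1}))\geq \max_\beta \lambda_\beta(T(n_{k+1})),
\]
so $\min_\alpha\lambda_\alpha(T(n_k))\geq\max_\beta\lambda_\beta(T(n_{k+1}))$; while from the same expression $\max_\alpha\lambda_\alpha(T(n_k))\leq \|B(n_k,n_{k+1})\|\,\max_\beta\lambda_\beta(T(n_{k+1}))$. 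Dividing the second inequality by the first, the common factor $\max\lambda(T(n_{k+1}))$ cancels, and we get the clean bound
\[
\frac{\max_\alpha\lambda_\alpha(T(n_k))}{\min_\alpha\lambda_\alpha(T(n_k))}\;\leq\;\|B(n_k,n_{k+1})\|.
\]
Condition (a) then finishes the argument: $\|B(n_k,n_{k+1})\|=O_\varepsilon(\|B(0,n_k)\|^{\eta+\varepsilon})$ for every $\varepsilon>0$, which is exactly the stated estimate.

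Neither step presents a real obstacle; the only minor technicality is matching the norm convention so that no loss factor appears in Part~1, and ensuring that one invokes the positivity $B_{\beta\alpha}(n_k,n_{k+1})\geq 1$ at a Yoccoz time rather than at an arbitrary Rauzy step, since the sharp cancellation $\eta+\varepsilon$ (as opposed to the weaker $2\eta+\varepsilon$ one would obtain by iterating part~1 over two consecutive Zorich steps) relies on using the \emph{same} quantity $\max\lambda(T(n_{k+1}))$ both above and below in the ratio.
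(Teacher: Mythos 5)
Your proof is correct and matches the paper's (very terse) argument exactly: both rest on the transformation law $\lambda(T(0))={}^tB(0,n)\lambda(T(n))$ together with the fact that at Zorich/Yoccoz times all entries of $B(n_k,n_{k+1})$ are $\geq 1$; you have simply spelled out the details that the paper leaves implicit. The only thing worth flagging is that the exact two-sided inequality in the first assertion is tied to the norm convention $\|B\|=\sum_{\alpha,\beta}B_{\alpha\beta}$ (which you correctly identify), and that for the second assertion what you actually use is not "dividing inequalities" but the chain $M\leq\lambda_\alpha(T(n_k))\leq\|B(n_k,n_{k+1})\|\,M$ for every $\alpha$, with $M:=\max_\beta\lambda_\beta(T(n_{k+1}))$ --- both of which you handle correctly.
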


\begin{proof} These estimates follow from the facts that the lengths $\lambda_{\alpha}(T(m))$ of the intervals exchanged by $T(n)$ are deduced from the lengths $\lambda_{\alpha}(T(n))$ via the matrix $B(m,n)$, and the positivity of the matrices $B(n_k, n_{k+1})$.  
\end{proof}

\subsection{Reducing general Birkhoff sums to special Birkhoff sums} 

Let $\varphi:I(T)\to\mathbb{R}$ be a bounded function, $x\in I(T)$ and $N\in\mathbb{N}^*$. Denote by $y$ the closest point to $u_0(T)$ of the finite orbit $(T^j(x))_{0\leq j<N}$, and let $k\in\mathbb{N}$ be the largest integer such that $y\in I(T(n_k))$. 

It is shown in page 840 of \cite{MMY05} (see also page 132 of \cite{MY16}) that 
$$|S_N\varphi(x)|:=\left|\sum\limits_{j=0}^{N-1}\varphi(T^j(x))\right|\leq \sum\limits_{l=0}^{k}\|B(n_l,n_{l+1})\|\cdot\|S(0,n_l)\varphi\|_{L^{\infty}}$$

This estimate has the following consequence: 

\begin{proposition}\label{p.reduction} If $T$ is an i.e.t. satisfying the condition (a) and $\varphi:I(T)\to\mathbb{R}$ is a bounded function such that, for some constants $C(\varphi)>0$ and $\omega>\eta$, one has 
$$\|S(0,n_k)\varphi\|_{L^{\infty}}\leq C(\varphi) \|B(0,n_k)\|^{-\omega} \quad \forall\,k\in\mathbb{N},$$ 
then the Birkhoff sums of $\varphi$ are bounded. 
\end{proposition}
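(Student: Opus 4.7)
The plan is to directly feed the two hypotheses into the pointwise estimate recalled above (from \cite{MMY05}, p.~840) and then show that the resulting geometric series is summable. Concretely, for any $x\in I(T)$ and $N\in\mathbb{N}^*$, choosing $y$ and $k$ as in the statement of that estimate, one has
\[
|S_N\varphi(x)|\;\leq\;\sum_{l=0}^{k}\|B(n_l,n_{l+1})\|\cdot\|S(0,n_l)\varphi\|_{L^\infty}\,.
\]
I will first apply condition (a), which for each $\varepsilon>0$ gives a constant $C_\varepsilon$ with $\|B(n_l,n_{l+1})\|\leq C_\varepsilon\,\|B(0,n_l)\|^{\eta+\varepsilon}$, and then the standing hypothesis $\|S(0,n_l)\varphi\|_{L^\infty}\leq C(\varphi)\|B(0,n_l)\|^{-\omega}$, to arrive at
\[
|S_N\varphi(x)|\;\leq\;C_\varepsilon\,C(\varphi)\sum_{l=0}^{k}\|B(0,n_l)\|^{-(\omega-\eta-\varepsilon)}\,.
\]

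The next step is to fix $\varepsilon\in(0,\omega-\eta)$, which is possible by the assumption $\omega>\eta$, so that the exponent $\tau:=\omega-\eta-\varepsilon$ is strictly positive. It then remains to establish geometric growth of $\|B(0,n_l)\|$ along the Yoccoz subsequence; this is where the definition of $(n_l)$ is used. Since by construction every entry of $B(n_l,n_{l+1})$ is a positive integer (hence $\geq 1$), a standard estimate in Rauzy--Veech theory (see the discussion preceding the analogous bound in \cite{MMY05}) gives a constant $\rho>1$, depending only on $d$, such that $\|B(0,n_{l+1})\|\geq \rho\,\|B(0,n_l)\|$ for all $l\geq 0$. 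Consequently $\|B(0,n_l)\|^{-\tau}\leq \rho^{-\tau l}\,\|B(0,n_0)\|^{-\tau}$, and the geometric series $\sum_{l\geq 0}\rho^{-\tau l}$ converges.

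Putting everything together yields a bound on $|S_N\varphi(x)|$ which is independent of $k$, and hence of $N$ and $x$. This shows that the Birkhoff sums of $\varphi$ are uniformly bounded, as desired.

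The one step that demands a moment of care is the geometric growth of $\|B(0,n_l)\|$: this is the entire point of introducing the Yoccoz acceleration rather than summing over all Rauzy--Veech steps, and it is where positivity of $B(n_l,n_{l+1})$ is crucial. Everything else reduces to routine algebra with the two Diophantine estimates.
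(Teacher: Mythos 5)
Your proposal is correct and, since the paper treats this Proposition as an immediate consequence of the pointwise estimate imported from \cite{MMY05} (page~840) without writing out a separate proof, your argument is the one the authors had in mind. The use of condition (a), the standing hypothesis on $\|S(0,n_l)\varphi\|_{L^\infty}$, and the choice $\varepsilon<\omega-\eta$ so that $\tau:=\omega-\eta-\varepsilon>0$ are all correct and exactly what is needed.

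The one place where you should tighten the wording is the per-step geometric growth claim. It is \emph{not} generally true that there is $\rho>1$, depending only on $d$, with $\|B(0,n_{l+1})\|\geq\rho\|B(0,n_l)\|$ for every $l$: for instance with $d=2$, if $B(0,n_l)$ is dominated by a single huge entry $M$ in the $(1,1)$ position and the next block is $\begin{pmatrix}1&1\\1&2\end{pmatrix}$, then $\|B(0,n_{l+1})\|\approx M$ and the ratio tends to $1$. What is true, and suffices for your purposes, is the elementary observation that $B(0,n_l)$ is a product of $l$ nonnegative integer matrices each of which (by the definition of the Yoccoz acceleration) has all entries $\geq 1$; since the product of two $d\times d$ matrices with all entries $\geq 1$ has all entries $\geq d$, one gets by induction that every entry of $B(0,n_l)$ is $\geq d^{l-1}$, hence $\|B(0,n_l)\|\geq d^{l-1}$ for all $l\geq 1$. (Equivalently, one may note the two-step bound $\|B(0,n_{l+2})\|\geq d\,\|B(0,n_l)\|$.) This gives the geometric lower bound needed to conclude $\sum_{l\geq 0}\|B(0,n_l)\|^{-\tau}<\infty$ for any $\tau>0$, and the rest of your argument then yields a bound on $|S_N\varphi(x)|$ uniform in $N$ and $x$, as desired.
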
 

\subsection{An application of Gottschalk--Hedlund theorem} 

It is explained in Subsection 3.2 of \cite{MMY12} that Proposition \ref{p.reduction} above and Gottschalk--Hedlund theorem imply the following criterion for the continuity of solutions of the cohomological equation: 

\begin{proposition}\label{p.continuity} Let $T$ be an i.e.t. verifying the condition (a) and $\varphi\in C^0(\sqcup_{\alpha\in\A} I^t_{\alpha}(T))$ such that, for some constants $C(\varphi)>0$ and $\omega>\eta$, one has 
$$\|S(0,n_k)\varphi\|_{L^{\infty}}\leq C(\varphi) \|B(0,n_k)\|^{-\omega} \quad \forall\,k\in\mathbb{N}.$$

Then, there exists $\psi\in C^0(\overline{I(T)})$ with $\varphi = \psi\circ T - \psi$. 
\end{proposition}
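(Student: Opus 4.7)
The plan is to combine the hypothesis with Proposition \ref{p.reduction} and then invoke a version of the Gottschalk--Hedlund theorem adapted to interval exchange maps.

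First, I would verify that, since $\omega>\eta$, the decay estimate $\|S(0,n_k)\varphi\|_{L^\infty}\leq C(\varphi)\|B(0,n_k)\|^{-\omega}$ is exactly the hypothesis of Proposition \ref{p.reduction}. This yields a uniform bound
$$\sup_{N,\,x}\,|S_N\varphi(x)|<\infty,$$
with $x$ ranging over points whose forward $T$-orbit up to time $N$ avoids the singularities $u_i(T)$. I would also remark that condition (a) implicitly requires $T$ to admit infinitely many Rauzy--Veech iterations, so $T$ has no connections and is therefore minimal by Keane's theorem.

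Next, I would construct $\psi$ following the standard Gottschalk--Hedlund recipe. Pick a base point $x_0\in I(T)$ whose bi-infinite $T$-orbit avoids the countable set of forward and backward images of the singularities $u_i(T)$ and $v_j(T)$, and set $\psi(T^n(x_0)):=S_n\varphi(x_0)$ for $n\geq 0$, with the analogous formula for $n<0$. Then the identity $\varphi=\psi\circ T-\psi$ is tautological on this dense orbit, and $\psi$ is bounded on it by the first step. The continuity of $\varphi$ on each $\overline{I^t_\alpha(T)}$, combined with the same telescoping scheme through special Birkhoff sums that underlies Proposition \ref{p.reduction}, produces a modulus of continuity for $\psi$ along the orbit, so that $\psi$ admits a unique continuous extension to $\overline{I(T)}$.

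The main obstacle is that $T$ is not a homeomorphism of $\overline{I(T)}$ and $\varphi$ has only prescribed one-sided limits at the singularities, so the classical compact-space version of Gottschalk--Hedlund does not apply directly. The statement is formulated in terms of $C^0(\sqcup_{\alpha\in\A}I^t_\alpha(T))$ precisely to accommodate this: the equicontinuity argument for $\psi$ only needs to compare pairs of orbit points lying in a common $I^t_\alpha(T)$, so that no comparison is required across a discontinuity. With this adjustment the rest of the argument is the classical Gottschalk--Hedlund reasoning for minimal dynamics, as carried out in Subsection 3.2 of \cite{MMY12}.
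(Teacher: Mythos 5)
Your overall strategy coincides with the paper's: the paper proves this proposition simply by citing Subsection~3.2 of \cite{MMY12}, and you likewise combine Proposition~\ref{p.reduction} with a Gottschalk--Hedlund construction and defer the technicalities to that same reference. The preliminary observations (that $\omega>\eta$ feeds the hypothesis of Proposition~\ref{p.reduction}, that condition~(a) presupposes an infinite Rauzy--Veech orbit hence no connections and therefore minimality by Keane, and that $\psi$ is defined on a dense orbit by $\psi(T^n x_0):=S_n\varphi(x_0)$ so that $\psi\circ T-\psi=\varphi$ tautologically and $\psi$ is bounded) are all correct and consistent with the standard argument.

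The place where the proposal has a genuine gap is the sentence asserting that ``the equicontinuity argument for $\psi$ only needs to compare pairs of orbit points lying in a common $I^t_\alpha(T)$, so that no comparison is required across a discontinuity.'' The conclusion of the proposition is $\psi\in C^0(\overline{I(T)})$, a single continuous function on the whole closed interval; in particular the one-sided limits of $\psi$ must agree at every interior singularity $u_i(T)$, $0<i<d$. Controlling the oscillation of $\psi$ only between orbit points on the same side of each $u_i$ would establish at most that $\psi$ extends continuously to each $\overline{I^t_\alpha(T)}$ separately, i.e.\ $\psi\in C^0\bigl(\sqcup_\alpha \overline{I^t_\alpha(T)}\bigr)$, which still allows jumps at the $u_i$. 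Ruling out those jumps is a real step of the proof: one typically passes to the compact ``doubled'' interval on which $T$ becomes a minimal homeomorphism and $\varphi$ extends continuously, applies the classical Gottschalk--Hedlund theorem there, and then verifies that the resulting solution takes equal values on the two copies of each singularity, so that it descends to $\overline{I(T)}$. Your outline, by explicitly declaring that no comparison across a discontinuity is needed, dismisses exactly this step. Since you do ultimately defer the full argument to \cite{MMY12} this is not a wrong route, but the claim as written would not by itself deliver a function in $C^0(\overline{I(T)})$; it should be replaced by an acknowledgement that the matching of one-sided limits at the $u_i$ is part of what the cited argument proves.
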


\subsection{$C^0$ solutions to cohomological equations}\label{ss.aR}

At this point, we are ready to discuss the following improvement of Theorem 3.10 of \cite{MMY12}: 

\begin{theorem}\label{t.310} Let $T$ be an i.e.t. in $DC(\eta,\theta,\sigma)$ for some parameters $\eta\geq0$, $\theta, \sigma>0$ with 
$$\left(4d+\frac{d(d-1)}{\sigma}\right)\eta<\theta$$
Then, there are bounded operators $L_0:C^{1+BV}_{\partial}(\sqcup_{\alpha\in\A} I^t_{\alpha}(T))\to C^0(\overline{I(T)})$ and $L_1:C^{1+BV}_{\partial}(\sqcup_{\alpha\in\A} I^t_{\alpha}(T))\to\Gamma_u(T)$ such that 
$$\varphi = L_1(\varphi)+ L_0(\varphi)\circ T - L_0(\varphi)$$ 
\end{theorem}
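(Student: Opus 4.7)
The plan is to refine the approach of \cite[\S 3]{MMY12}. By Proposition~\ref{p.continuity}, it is enough to construct a bounded operator
$$L_1:C^{1+BV}_\partial(\sqcup_{\alpha\in\A} I^t_\alpha(T))\to\Gamma_u(T)$$
and produce an exponent $\omega>\eta$ such that for every $\varphi\in C^{1+BV}_\partial$ and every $k\in\Nset$,
\begin{equation*}
\|S(0,n_k)(\varphi-L_1(\varphi))\|_{L^\infty}\leq C(\varphi)\,\|B(0,n_k)\|^{-\omega},
\end{equation*}
with $C(\varphi)$ depending continuously on the $C^{1+BV}$-norm of $\varphi$; the continuous primitive $L_0(\varphi)$ is then delivered by the Gottschalk--Hedlund step underlying that proposition.

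The first ingredient is an analysis of the special Birkhoff sums of a $C^{1+BV}$ function. The key observation, already exploited in \cite{MMY05}, is that $T$ acts by translations on each continuity subinterval, so differentiation commutes with $S(0,n_k)$, namely $D\,S(0,n_k)\varphi = S(0,n_k)\,D\varphi$. Using the Denjoy--Koksma inequality for BV functions, combined with the almost-balanced bound of Proposition~\ref{p.balanced-times} (whose proof consumes a factor $\|B(0,n_k)\|^{\eta+\varepsilon}$ from condition~(a)) and the spectral gap from condition~(b) on $\Gamma_0(T)$, I would obtain a decomposition
\begin{equation*}
S(0,n_k)\varphi\;=\;B(0,n_k)\chi_0(\varphi)\;+\;R_k(\varphi),
\end{equation*}
where $\chi_0:C^{1+BV}_\partial\to\Gamma_\partial(T)$ is linear and bounded and the continuous remainder satisfies $\|R_k(\varphi)\|_{L^\infty}\leq C(\varphi)\,\|B(0,n_k)\|^{-\theta+c_1\eta+\varepsilon}$ for an explicit constant $c_1$ absorbing the first batch of $\eta$-losses.

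For the piecewise-constant main term, I would decompose $\chi_0(\varphi)$ along the splitting $\Gamma_\partial(T)=\Gamma_s(T)\oplus\Gamma_u(T)$ and define $L_1(\varphi)$ to be its $\Gamma_u$-component; verifying that this projection is bounded as an operator into $\Gamma_u(T)$ requires inverting $B(0,n_k)$ on the at most $(d-1)$-dimensional unstable part and summing a geometric series of pulled-back corrections, exactly as in \cite[\S 3]{MMY12}. The residual stable part then satisfies $\|B(0,n_k)(\chi_0(\varphi)-L_1(\varphi))\| = O(\|B(0,n_k)\|^{-\sigma+c_2\eta+\varepsilon})$ by the very definition of $\Gamma_s(T)$ together with condition~(c), while Remark~\ref{r.39} (which uses condition~(d)) identifies $\Gamma_s(T)$ with the coboundary space $\Gamma_T$, so that the surviving stable contribution genuinely corresponds to a continuous primitive of the cohomological equation. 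Combining the two estimates yields the desired decay estimate with an exponent $\omega>\eta$ precisely when the numerical hypothesis $(4d+d(d-1)/\sigma)\eta<\theta$ holds.

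The main obstacle is the sharp quantitative bookkeeping of the additive $\eta$-losses. Each of the four steps in the argument---Proposition~\ref{p.balanced-times}, the Denjoy--Koksma inequality applied at the $C^{1+BV}$ scale, the inversion of $B(0,n_k)$ on the unstable part of $\Gamma_\partial(T)$, and the summation of the geometric series defining $L_1$---contributes a separate factor of the form $\|B(0,n_k)\|^{\eta+\varepsilon}$, and the content of the refinement over \cite{MMY05,MMY12} is precisely to tie these contributions together into the single Diophantine condition with coefficient exactly $4d+d(d-1)/\sigma$. When $\eta=0$ and $\Gamma_s(T)$ is Lagrangian all these losses disappear and the scheme collapses to the original proof, so the delicate part of the argument is to keep the coefficient on the left-hand side of the assumption at this explicit value.
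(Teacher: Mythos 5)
Your overall strategy matches the paper's at the level of ideas: reduce to a decay estimate on special Birkhoff sums via Proposition~\ref{p.continuity}, exploit that $D$ commutes with $S(0,n_k)$ on the continuity intervals, and correct by a piecewise constant in $\Gamma_u$. But the quantitative core of your scheme does not match the actual proof, and in a way that matters for the precise constant $4d+d(d-1)/\sigma$. First, the claimed decomposition $S(0,n_k)\varphi = B(0,n_k)\chi_0(\varphi)+R_k(\varphi)$ with a $k$-independent $\chi_0\in\Gamma_\partial(T)$ and $\|R_k\|_{L^\infty}=O(\|B(0,n_k)\|^{-\theta+c_1\eta+\varepsilon})$ is not available. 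The only estimate the spectral gap (b) gives you, after a two-regime optimization over an auxiliary parameter $\rho$ in the factorization $\|B(n_{j+1},n_k)|_{\Gamma_0}\|$, is $\|S(0,n_k)D\varphi\|_{L^\infty}=O_\varepsilon(\|B(0,n_k)\|^{1-\theta/d+\eta+\varepsilon})\|D\varphi\|_{BV}$ --- note the $\theta/d$ (the factor $1/d$ comes from $\|B(0,n_{j+1})^{-1}\|\leq\|B(0,n_{j+1})\|^{d-1}$, $B\in SL(\mathbb{Z}^{\mathcal A})$), and the bound still \emph{grows}; there is no remainder decaying at rate $-\theta$.

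Second, and more seriously, you gesture at ``summing a geometric series of pulled-back corrections, exactly as in \cite{MMY12}'' without identifying what this series actually is. The technical heart of the proof is the construction of a correction to the primitive operator $P_0^{(k)}$ that \emph{commutes} with the special Birkhoff sum operators, $S(n_k,n_l)\circ P^{(k)}=P^{(l)}\circ S(n_k,n_l)$, via the convergent series $\Delta P^{(k)}=\sum_{l>k}B_\flat(n_k,n_l)^{-1}\circ\Lambda(n_{l-1},n_l)\circ S(n_k,n_{l-1})$ (convergence uses condition~(c)); this is what produces a piecewise constant correction valid simultaneously for all scales, and it is here --- not in ``inverting $B(0,n_k)$ on the unstable part'' --- that the operator $B_\flat^{-1}$ appears. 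One then writes $S(0,n_k)P^{(0)}D\varphi=\Phi_k+\sum_{j\leq k}B(n_j,n_k)\Delta\chi_j$ and estimates the stable tail $B(n_j,n_k)\Delta\chi_j$ by a second two-regime optimization over a parameter $\kappa$, producing the final exponent $(4\eta-\theta/d)\frac{\sigma}{d-1}$. This single coupled exponent, which must be $<-\eta$, is where $\theta$, $\sigma$ and $d$ combine to give the stated condition; your scheme of two independent rates $-\theta+c_1\eta$ and $-\sigma+c_2\eta$ would yield a pair of uncoupled inequalities of a genuinely different (and incorrect) form. Your accounting of the four $\eta$-losses is also off: they come from $\|B(n_{l-1},n_l)\|$ and $\max_\alpha\lambda_\alpha(T(n_{l-1}))$ inside $\Lambda(n_{l-1},n_l)$, from $\sum_j\|B(n_j,n_{j+1})\|$ inside the BV bound on $\|S(0,n_l)\phi\|$, and from $\|B(n_k,n_{k+1})\|$ in the telescoping defining $\Delta\chi_{k+1}$ --- not from the inversion step or the geometric series, which condition~(c) controls at subexponential rate $O_\varepsilon(\|B\|^\varepsilon)$.
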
 

\begin{proof} Let $\phi\in BV(\sqcup_{\alpha\in\A} I^t_{\alpha}(T))$ with zero mean. As it is explained in page 841 of \cite{MMY05}, one has 
$$\|S(0,n_k)\phi\|_{L^{\infty}}\leq \left(\sum\limits_{0\leq j < k} \|B(n_j, n_{j+1})\|\cdot\|B(n_{j+1}, n_k)|_{\Gamma_0(T(n_{j+1}))}\|\right)\|\phi\|_{BV}$$ 

By the condition (a), we have $\|B(n_j,n_{j+1})\|=O_{\varepsilon}(\|B(0,n_j)\|^{\eta+\varepsilon})$ for all $\varepsilon>0$ and $j\in\mathbb{N}$. 

Next, we estimate $\|B(n_{j+1}, n_k)|_{\Gamma_0(T(n_{j+1}))}\|$ by distinguishing two regimes depending on a parameter $\rho>0$ to be chosen later: 
\begin{itemize}
\item if $\|B(0,n_{j+1})\|\leq \|B(0,n_k)\|^{\rho}$, then $\|B(0,n_{j+1})^{-1}\|=O(\|B(0,n_k)\|^{(d-1)\rho})$ (because $B(m,n)\in SL(\mathbb{Z}^{\A})$); since $B(0,n_k)\cdot B(0,n_{j+1})^{-1}=B(n_{j+1}, n_k)$ and $B(0,n_{j+1})^{-1} \Gamma_0(T(n_{j+1}))=\Gamma_0(T)$, the condition (b) implies that 
$$\|B(n_{j+1}, n_k)|_{\Gamma_0(T(n_{j+1}))}\|=O(\|B(0,n_k)\|^{1-\theta+(d-1)\rho})$$
\item if $\|B(0,n_{j+1})\|>\|B(0,n_k)\|^{\rho}$, then we write $B(0,n_k)=B(n_{j+1}, n_k)\cdot B(0,n_{j+1})$; since the entries of $B(n_{j+1}, n_k)$ are positive when $j+1<k$ and $B(n_{j+1},n_k)=\textrm{Id}$ when $j+1=k$, we obtain 
$$\|B(0,n_k)\|\geq \|B(n_{j+1}, n_k)\|\cdot\|B(0,n_{j+1})\|\geq \|B(n_{j+1},n_k)\|\cdot\|B(0,n_k)\|^{\rho}$$
and, \emph{a fortiori}, 
$$\|B(n_{j+1}, n_k)|_{\Gamma_0(T(n_{j+1}))}\|=O(\|B(0,n_k)\|^{1-\rho})$$
\end{itemize}
By optimizing the value of $\rho>0$ (i.e., taking $1-\theta+(d-1)\rho = 1-\rho$), we deduce that 
$$\|B(n_{j+1}, n_k)|_{\Gamma_0(T(n_{j+1}))}\|=O(\|B(0,n_k)\|^{1-\frac{\theta}{d}})$$
in both regimes. 

From this discussion, we see that 
$$\|S(0,n_k)\phi\|_{L^{\infty}}=O_{\varepsilon}(\|B(0, n_k)\|^{1-\frac{\theta}{d}+\eta+\varepsilon})\|\phi\|_{BV}$$

Now, let $\varphi\in C^{1+BV}(\sqcup_{\alpha\in\A} I^t_{\alpha}(T))$ such that $\int_{I(T)}D\varphi = 0$. As it is explained in Remark 3.11 of \cite{MMY12}, the proof of the theorem is reduced to show that there exists $\chi\in\Gamma(T)$ such that $\varphi-\chi$ satisfies the hypothesis of Proposition \ref{p.continuity}. 

For this sake, given $\phi\in BV(\sqcup_{\alpha\in\A} I^t_{\alpha}(T(n_k)))$ with zero mean, let $P_0^{(k)}\phi$ be the class modulo $\Gamma_s(T(n_k))$ of the primitive of $\phi$ with zero mean on each interval $I^t_{\alpha}(T(n_k))$, $\alpha\in\A$. By following the arguments in pages 843 of \cite{MMY05}, we shall modify $P_0^{(k)}$ into a bounded operator $P^{(k)}=P_0^{(k)}+\Delta P^{(k)}$ such that 
$$S(n_k, n_l)\circ P^{(k)} = P^{(l)}\circ S(n_k, n_l).$$ 
In this direction, we set $\Lambda(k,l) := P_0^{(l)}\circ S(n_k,n_l) - S(n_k,n_l)\circ P_0^{(k)}$ and we will prove that 
$$\Delta P^{(k)} = \sum\limits_{l>k} B_{\flat}(n_k,n_l)^{-1}\circ\Lambda(n_{l-1},n_l)\circ S(n_k,n_{l-1})$$  
is a bounded operator such that $\varphi-\chi:=P^{(0)}D\varphi$ has the desired properties. 

For this sake, recall from page 843 of \cite{MMY05} that 
$$\|\Lambda(n_{l-1}, n_l)\phi\|_{L^{\infty}}\leq 2\|B(n_{l-1}, n_l)\|\cdot\max\limits_{\alpha\in\A} \lambda_{\alpha}(T(n_{l-1}))\cdot \|\phi\|_{L^{\infty}}$$ 
Since the condition (a) and Proposition \ref{p.balanced-times} imply that 
$$\|B(n_{l-1}, n_l)\|=O_{\varepsilon}(\|B(0,n_{l-1})\|^{\eta+\varepsilon})$$
and 
$$\max\limits_{\alpha\in\A} \lambda_{\alpha}(T(n_{l-1}))=O_{\varepsilon}(\|B(0,n_{l-1})\|^{-1+\eta+\varepsilon}),$$ 
we obtain 
$$\|\Lambda(n_{l-1}, n_l)\phi\|_{L^{\infty}}=O_{\varepsilon}(\|B(0,n_{l-1})\|^{-1+2\eta+\varepsilon})$$ 
On the other hand, the condition (c) says that $\|B_{\flat}(0,n_l)^{-1}\|=O_{\varepsilon}(\|B(0,n_l)\|^{\varepsilon})$ and we saw above that $\|S(0,n_l)\phi\|_{L^{\infty}} = O_{\varepsilon}(\|B(0, n_l)\|^{1-\frac{\theta}{d}+\eta+\varepsilon})\|\phi\|_{BV}$. Thus, 
$$\|\Delta P^{(0)}\phi\| = O_{\varepsilon}\left(\sum\limits_{l>0}\|B(0,n_l)\|^{3\eta-\frac{\theta}{d}+\varepsilon}\right)\cdot\|\phi\|_{BV},$$
and, in general, 
\begin{eqnarray*}
\|\Delta P^{(k)}S(0,n_k)\phi\| &=& O_{\varepsilon}\left(\sum\limits_{l>k}\|B(0,n_l)\|^{3\eta-\frac{\theta}{d}+\varepsilon}\right)\cdot\|\phi\|_{BV} \\ &=& O_{\varepsilon}\left(\|B(0,n_k)\|^{3\eta-\frac{\theta}{d}+\varepsilon}\right)\cdot\|\phi\|_{BV}
\end{eqnarray*}
for all $k\in\mathbb{N}$.  

We affirm that the special Birkhoff sums of $P^{(0)} D\varphi$ satisfy the assumptions of Proposition \ref{p.continuity}. Indeed, the definition of $P^{(k)}$ says that the class of $S(0,n_k)P^{(0)}D\varphi$ modulo $\Gamma_s(T(n_k))$ is $P^{(k)}S(0,n_k)D\varphi$, and the definition of $P_0^{(k)}$ and our discussion above imply 
\begin{eqnarray*}
\|P_0^{(k)}S(0,n_k)D\varphi\| &\leq& \max\limits_{\alpha\in\A} \lambda_{\alpha}(T(n_k))\cdot \|S(0,n_k)D\varphi\|_{L^{\infty}} \\ 
&=& O_{\varepsilon}(\|B(0, n_k)\|^{2\eta-\frac{\theta}{d}+\varepsilon}) \|D\varphi\|_{BV}. 
\end{eqnarray*} 
By combining this information with the previous estimate for $\Delta P^{(k)}$, we get 
$$\|S(0,n_k)P^{(0)}D\varphi\| = \|P^{(k)} S(0,n_k)D\varphi\| = O_{\varepsilon}\left(\|B(0,n_k)\|^{3\eta-\frac{\theta}{d}+\varepsilon}\right)\cdot\|D\varphi\|_{BV}$$ 

By definition, this means that $S(0,n_k)P^{(0)}D\varphi = \Phi_k+\chi_k$ with $\chi_k\in\Gamma_s(T(n_k))$ and 
$$\|\Phi_k\|_{L^{\infty}}=O_{\varepsilon}\left(\|B(0,n_k)\|^{3\eta-\frac{\theta}{d}+\varepsilon}\right)\cdot\|D\varphi\|_{BV}$$ 

Set $\Delta\chi_{k+1}:=S(n_k, n_{k+1})\Phi_k-\Phi_{k+1}$, so that $\chi_{k+1}=B(n_k,n_{k+1})\chi_k+\Delta\chi_{k+1}$ and 
$$S(0,n_k)P^{(0)}D\varphi = \Phi_k+\sum\limits_{j\leq k} B(n_j, n_k)\Delta\chi_j.$$
Note that $\|\Delta\chi_j\|=O_{\varepsilon}\left(\|B(0,n_j)\|^{4\eta-\frac{\theta}{d}+\varepsilon}\right)\cdot\|D\varphi\|_{BV}$ by the condition (a), and recall that the condition (c) ensures that $\|B(0,n_j)|_{\Gamma_s(T)}\|=O(\|B(0,n_j)\|^{-\sigma})$. We estimate $B(n_j,n_k)\Delta\chi_j$ depending on two regimes driven by a parameter $\kappa>0$ to be chosen in a moment:
\begin{itemize}
\item if $\|B(0,n_j)\|\leq \|B(0,n_k)\|^{\kappa}$, we write $B(n_j,n_k)=B(0,n_k)\cdot B(0,n_j)^{-1}$ and recall that $B(0,n_j)^{-1}\Gamma_s(T(n_j)) =\Gamma_s(T)$ in order to get 
\begin{eqnarray*}
\|B(n_j,n_k)\Delta\chi_j\|&\leq& \|B(0,n_k)\|^{-\sigma} \|B(0,n_j)\|^{d-1}\|\Delta\chi_j\| \\ 
&=&O_{\varepsilon}(\|B(0,n_k)\|^{-\sigma+(d-1+4\eta-\frac{\theta}{d})\kappa +\varepsilon})\cdot\|D\varphi\|_{BV}
\end{eqnarray*}
\item if $\|B(0,n_j)\| > \|B(0,n_k)\|^{\kappa}$, we use the condition (c) in order to obtain 
\begin{eqnarray*}
\|B(n_j,n_k)\Delta\chi_j\|&=&O_{\varepsilon}(\|B(0,n_k)\|^{\varepsilon}) \|\Delta\chi_j\| \\ 
&=&O_{\varepsilon}(\|B(0,n_k)\|^{(4\eta-\frac{\theta}{d})\kappa+\varepsilon})\cdot\|D\varphi\|_{BV}
\end{eqnarray*}
\end{itemize}
By optimizing $\kappa>0$ (i.e., by choosing $-\sigma+(4\eta-\frac{\theta}{d}+d-1)\kappa = (4\eta-\frac{\theta}{d})\kappa$), we conclude that 
$$\|B(n_j,n_k)\Delta\chi_j\|=O_{\varepsilon}(\|B(0,n_k)\|^{(4\eta-\frac{\theta}{d})\frac{\sigma}{d-1}})\cdot\|D\varphi\|_{BV}$$

Therefore, we have 
$$\|S(0,n_k)P^{(0)}D\varphi\|_{L^{\infty}} = O_{\varepsilon}(\|B(0,n_k)\|^{(4\eta-\frac{\theta}{d})\frac{\sigma}{d-1}})\cdot\|D\varphi\|_{BV},$$ 
so that the special Birkhoff sums of $P^{(0)}D\varphi$ match the hypothesis of Proposition \ref{p.continuity} whenever $(4\eta-\frac{\theta}{d})\frac{\sigma}{d-1}<-\eta$, that is, $(4+\frac{(d-1)}{\sigma})d\eta<\theta$.  
\end{proof}

\subsection{$C^{r-1}$ solutions to cohomological equations}\label{ss.Roth} 

By Proposition 3.12 of \cite{MMY12} (and Remark \ref{r.39} above), we have that 
$$\textrm{dim } \Gamma(r) = rd, \, \textrm{dim } \Gamma_{\partial}(r) = (2g-1)r+1 \, \textrm{ and } \, \textrm{dim } \Gamma_T(r) = \textrm{dim } \Gamma_T + r-1 = \mu+r-1$$

In particular, $\Gamma_{\partial}(r+1)/\Gamma_T(r+1)$ has dimension $(g-1)(2r+1) + g-\mu+1$ and $\Gamma_{\partial}(r-2)/\Gamma_T(r-2)$ has dimension $(g-1)(2r-4) + 2-\mu$. 

As it is explained in the short proof of Theorem 3.13 of \cite{MMY12}, we can use Theorem \ref{t.310} above as the initial step $r=1$ of an inductive argument on $r$ showing Theorem \ref{t.A}.  

\section{Proof of Theorem \ref{t.B}} \label{section:ProofB}

In this section, we provide a slight improvement of the results in \cite{MMY12} about local conjugacy classes of interval exchange maps. 

\subsection{Conjugacy invariants} Let $T$ be a $C^k$ generalized i.e.t. with combinatorial data $\pi$. Given $c\in\A\times\{L,R\}$, let $j(T,c)$ be the $k$-jet at $0$ of 
$$x\mapsto T(u_{\alpha}^t+x)-u^b_{\alpha}$$ 
where $x=0+$, resp. $0-$ for $c=(\alpha, R)$, resp. $c=(\alpha, L)$. 

For each cycle $C\in\Sigma$ of $\sigma$ and $c\in C$, let 
$$J(T, C, c) = \prod\limits_{i=0}^{\#C-1}j(T,\sigma^i(c))$$

The \emph{conjugacy invariant} $J^k(T)$ is the collection of conjugacy classes in $J^k$ of $J(T,C,c)$ for all cycles $C\in \Sigma$. Note that $J^k(T)$ does not depend on the choices of $c\in C$. 

\subsection{Reduction to simple deformations}

Let $T_0\in DC(\eta,\theta,\sigma)$ be an almost Roth i.e.t. acting on $I=(u_0(T_0),u_d(T_0))$. 

Fix $r\geq 3$, set $d^* := (g-1)(2r+1)+s+g-\mu$, let $\ell\geq0$ be an integer, consider a neighborhood $V=[-t_0,t_0]^{\ell+d^*}$ of the origin $0\in\mathbb{R}^{\ell+d^*}$, and write $t\in V$ as $t=(t', t'')$ with $t'\in[-t_0,t_0]^{\ell}$ and $t''\in[-t_0,t_0]^{d^*}$. 

Denote by $(T_t)_{t\in V}$ a family of generalized i.e.t. on $I$ of class $C^{r+3}$ with the same combinatorial data of $T_0$. We assume that $t\mapsto T_t$ is $C^1$ and the conjugacy invariant of $T_t$ in $J^{r+3}$ is trivial for all $t\in V$. 

In this context, the derivative of $T_t$ with respect to $t$ is a linear map $\Delta T$ from $\mathbb{R}^{\ell+d^*}$ to $C^{r+1+BV}_{\partial}(\sqcup_{\alpha\in\A} I^t_{\alpha}(T_0))$. By composing $\Delta T$ with the projection $L_1(r+1)$ from Theorem \ref{t.A} in order to get 
$$\overline{\Delta T}:=L_1(r+1)\circ \Delta T: \mathbb{R}^{\ell+d^*}\to \Gamma_{\partial}(r+1)/\Gamma_{T_0}(r+1)$$
Recall that the dimension of $\Gamma_{\partial}(r+1)/\Gamma_{T_0}(r+1)$ is $(g-1)(2r+1) + g-\mu+1 = d^*-s+1$. We suppose that $(T_t)_{t\in V}$ satisfies the transversality conditions (Tr1) and (Tr2) from page 1607 of \cite{MMY12}. 

The precise version of Theorem \ref{t.B} is:

\begin{theorem}\label{t.51} Under the hypothesis above, there are $t_1\leq t_0$ and a neighborhood $W$ of the identity in $\textrm{Diff}^r(\overline{I(T_0)})$ such that: 
\begin{itemize}
\item for each $t'\in[-t_1,t_1]^{\ell}$, there are unique $t''\in[-t_1,t_1]^{d^*}$ and $h_{t'}\in W$ so that $T_{(t',t'')} = h_{t'}\circ T_0\circ h_{t'}^{-1}$;  
\item the maps $t'\mapsto t'':=\theta(t')$ and $t'\mapsto h_{t'}$ are $C^1$ and $D\theta|_{t'=0}=0$. 
\end{itemize}
\end{theorem}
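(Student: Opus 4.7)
The plan is to follow the strategy of \cite{MMY12} based on Herman's Schwarzian derivative trick, adapted to the almost Roth setting. Writing $g = h^{-1}$, the conjugacy equation $g \circ T_t = T_0 \circ g$ transforms, upon applying the Schwarzian derivative and using its chain rule, into a quasi-linear cohomological equation
\[
Sg - (Sg) \circ T_t \cdot (T_t')^2 \;=\; ST_t - (ST_0) \circ g \cdot (g')^2,
\]
whose right-hand side is small for non-linear simple deformations close to the linear i.e.t.\ $T_0$. Because $S$ maps $C^3$ into $C^0$ and is invertible with a gain of two derivatives, while Theorem~\ref{t.A} solves the linearized cohomological equation with a loss of $1+BV$ derivatives, the composition of the inverse Schwarzian with $L_0(r+1)$ yields a contraction on a small ball in a suitable $C^r$-space of simple deformations of class $C^{r+3}$, provided the $d^*$ obstructions have been killed.

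The implementation is a Banach-space implicit function theorem. The $d^* = (g-1)(2r+1) + s + g - \mu$ obstructions split as $(g-1)(2r+1) + g - \mu + 1 = d^* - s + 1$ obstructions living in $\Gamma_\partial(r+1)/\Gamma_{T_0}(r+1)$ (measured by $L_1(r+1)$) together with $s - 1$ boundary-type obstructions coming from the boundary operator $\partial$ applied to lower-order derivative jets at the discontinuities. The transversality hypotheses (Tr1)--(Tr2) ensure that the $t''$-coordinates parametrize precisely these $d^*$ obstructions with invertible derivative at the origin, while the $t'$-coordinates leave the conjugacy class free. A standard contraction argument, exactly as in \S 7 of \cite{MMY12}, then produces the unique $\theta(t') \in [-t_1,t_1]^{d^*}$ and $h_{t'} \in W$; $C^1$ regularity of $t' \mapsto (\theta(t'), h_{t'})$ follows from the $C^1$ dependence of $T_t$ on $t$ together with the boundedness of the operators $L_0(r+1)$ and $L_1(r+1)$.

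The main obstacle is to verify that the machinery of \cite{MMY12} really carries over when the stable space $\Gamma_s(T_0)$ has dimension $\mu < g$, i.e., that the $g - \mu$ neutral Oseledets directions contribute exactly $g - \mu$ additional codimensions to the conjugacy class rather than $g - \mu$ additional directions of conjugacy. This is ensured by the choice of $\Gamma_u(T_0)$ as a complement of $\Gamma_s(T_0)$ inside $\Gamma_\partial(T_0)$ (not merely inside the image of some unstable projector), together with condition~(d) of \S\S~\ref{subsec:DC}, which via Remark~\ref{r.39} gives $\Gamma_{T_0} = \Gamma_s(T_0)$. Vectors outside $\Gamma_s(T_0)$ therefore cannot be realized as coboundaries of continuous functions and hence genuinely obstruct the existence of a conjugacy, so they must be absorbed by the $t''$-parameters rather than by a choice of $h_{t'}$.

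Finally, the relation $D\theta|_{t'=0} = 0$ follows by differentiating the identity $T_{(t',\theta(t'))} = h_{t'} \circ T_0 \circ h_{t'}^{-1}$ at $t' = 0$, where $h_0 = \mathrm{id}$ and $\theta(0) = 0$. For any $v \in \mathbb{R}^\ell$, the derivative of the right-hand side is an infinitesimal coboundary, so $\Delta T(v, D\theta|_0 \cdot v)$ lies in $\Gamma_{T_0}(r+1)$, and hence $\overline{\Delta T}(v, D\theta|_0 \cdot v) = 0$. By transversality, the total obstruction map restricted to the $t''$-directions at $t = 0$ is an isomorphism onto the full $d^*$-dimensional obstruction space, forcing $D\theta|_0 \cdot v = 0$ for every $v$.
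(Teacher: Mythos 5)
Your proposal follows essentially the same route as the paper: Herman's Schwarzian derivative trick reduces the conjugacy problem to a quasi-linear cohomological equation, the almost-Roth version of the cohomological estimates (Theorem~\ref{t.A}) supplies the inverse of the linearized operator, and the result is obtained by an implicit-function / contraction argument deferred to \cite{MMY12}. Your third paragraph correctly identifies the conceptual pivot: Remark~\ref{r.39} (via condition~(d) of \S\S~\ref{subsec:DC}) gives $\Gamma_{T_0}=\Gamma_s(T_0)$, so the $g-\mu$ neutral Oseledets directions are genuine obstructions and increase the codimension rather than enlarge the conjugacy class, which is precisely what drives the new value $d^*=(g-1)(2r+1)+s+g-\mu$.

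Where the paper is more careful than your sketch: the authors' actual technical work is not the contraction argument (which they take verbatim from \S\S~6.1--6.6 of \cite{MMY12}) but the verification in \S\S~6.7 of \cite{MMY12} that the system of equations (6.1)--(6.4) supplies exactly $d^*+2$ independent conditions once $\dim\Gamma_u$ is replaced by $(g-1)(2r-4)+1-\mu$: they count $(d-1)$ equations from (6.1), $\dim\Gamma_u$ equations from (6.4), and $2(2g-1)$ from (6.2)--(6.3), and check the arithmetic $(d-1)+(g-1)(2r-4)+1-\mu+2(2g-1)=d^*+2$. You replace this count with an informal decomposition into $d^*-s+1$ obstructions in $\Gamma_\partial(r+1)/\Gamma_{T_0}(r+1)$ plus $s-1$ ``boundary-type'' obstructions; this is plausible and adds up, but it is not the paper's bookkeeping and glosses over the fact that the effective space for the Schwarzian equation is $\Gamma_\partial(r-2)/\Gamma_{T_0}(r-2)$ (loss of three derivatives from $S$), not $\Gamma_\partial(r+1)/\Gamma_{T_0}(r+1)$. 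Also note the paper relies on \S\S~6 of \cite{MMY12} rather than \S~7 (which treats $r=2$). Your concluding argument for $D\theta|_{t'=0}=0$ is correct in spirit, but as written it only uses the vanishing of $\overline{\Delta T}$, which gives $d^*-s+1$ scalar relations; you would need to invoke that the remaining boundary directions are automatically annihilated in the simple-deformation class to conclude $D\theta|_0 v=0\in\mathbb{R}^{d^*}$.
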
 

As it turns out, we can reduce Theorem \ref{t.51} to the case of \emph{simple deformations}: more concretely, Proposition 5.3 of \cite{MMY12} allows to pick $t_2<t_0$ and a $C^1$ family $(\widetilde{h}_t)_{t\in[-t_2,t_2]^{\ell+d^*}}$ in $\textrm{Diff}^{r+3}(\overline{I(T)})$ such that $\widetilde{T}_t = \widetilde{h}_t\circ T_t\circ \widetilde{h}_t^{-1}$ has singularities at $u_i(T_0)$, $0<i<d$, and $\widetilde{T}_t=T_0$ near the endpoints of $I_{\alpha}^t(T_0)$ for all $\alpha\in\A$. 

\subsection{End of proof of Theorem \ref{t.B}} It is not hard to see that the analysis of Herman's Schwarzian derivative trick performed in Subsections 6.1 to 6.6 of \cite{MMY12} can be performed verbatim after noticing that the subspace $\Gamma_u$ with $\Gamma_{\partial}(r-2) = \Gamma_{T_0}(r-2)\oplus\Gamma_u\oplus\mathbb{R}1$ has dimension 
$$\textrm{dim } \Gamma_u = \textrm{dim }\Gamma_{\partial}(r-2)/\Gamma_{T_0}(r-2) - 1 = (g-1)(2r-4)+1-\mu$$ 

At this point, our task consists in checking that the equations (6.1), (6.2), (6.3) and (6.4) in Subsection 6.7 of \cite{MMY12} provide $d^*+2$ independent conditions. In this direction, we observe that (6.1) provides $(d-1)$ equations, (6.4) gives $\textrm{dim }\Gamma_u = (g-1)(2r-4)+1-\mu$ equations, and (6.2) and (6.3) imposes $2(2g-1)$ equations (\emph{independently} on whether $(\alpha_t, R)$ and $({}_b\alpha, L)$ belong to the same cycle of $\sigma$ or not). Hence, the total number of independent equations is 
$$(d-1) + (g-1)(2r-4)+1-\mu + 2(2g-1) = d^*+2$$ 

\subsection*{Acknowledgements} This article grew out of several conversations in Pisa during a visit of G.~Forni and Carlos~Matheus to S.~Marmi. In particular, it is a pleasure for us to thank Scuola Normale Superiore and Centro Ennio De Giorgi for their hospitality during the preparation of this text. G.~Forni acknowledges the support
of the NSF Grant DMS 1600687. S.~Marmi acknowledges the support by UniCredit  R\&D division under the project {\it Dynamics and Information Research Institute}.



\end{document}